\newcommand{\R}{{\mathbb R}}
\def \d {\mathrm{d}}
\def \Cp {\mathrm{Cap}}
\def \ep {\varepsilon}
\newtheorem{thm}{Theorem}[section]
\newtheorem{cor}[thm]{Corollary}
\newtheorem{lem}[thm]{Lemma}
\newtheorem{rem}[thm]{Remark}
\newtheorem{defn}[thm]{Definition}
\numberwithin{equation}{section}
\title[]{Symmetry and monotonicity of singular solutions of double phase problems}
\author[S.\,Biagi]{Stefano Biagi}
\author[F.\,Esposito]{Francesco Esposito}
\author[E.\,Vecchi]{Eugenio Vecchi}
\address[S.\,Biagi]{Politecnico di Milano - Dipartimento di Matematica
\newline\indent
Via Bonardi 9, 20133 Milano, Italy}
\email{stefano.biagi@polimi.it}
\address[F.\,Esposito]{Dipartimento di Matematica e Informatica, Universit\`a della Calabria
\newline\indent
Ponte Pietro Bucci 31B, 87036 Arcavacata di Rende, Cosenza, Italy}
\email{esposito@mat.unical.it}
\address[E.\,Vecchi]{Politecnico di Milano - Dipartimento di Matematica
\newline\indent
Via Bonardi 9, 20133 Milano, Italy}
\email{eugenio.vecchi@polimi.it}
\subjclass[2010]{35B06, 35J75, 35J62, 35B51}
\keywords{Double phase problems, singular solutions, moving plane method}
\thanks{The authors are members of INdAM. S. Biagi
is partially supported by the INdAM-GNAMPA project
\emph{Metodi topologici per problemi al contorno associati a certe
classi di equazioni alle derivate parziali}.
F. Esposito is partially supported by PRIN project 2017JPCAPN (Italy): 
{\em Qualitative and quantitative aspects of nonlinear PDEs.}
E. Vecchi is partially supported
by the INdAM-GNAMPA project
\emph{Convergenze variazionali per funzionali e operatori dipendenti da campi vettoriali}}
\begin{document}
\begin{abstract}
We consider positive singular solutions of PDEs arising from double phase functionals. Exploiting a rather new version of the moving plane method originally developed by Sciunzi, we prove symmetry and monotonicity properties of such solutions.

\end{abstract}

\maketitle

\medskip

\section{Introduction}\label{intro}
 Integral functionals of the form
 \begin{equation}
 u \mapsto \int_{\Omega}f(\nabla u)\, \d x
 \end{equation}
 where $\Omega \subset \mathbb{R}^N$ and $f$ has {\it non-standard growth}, 
 have been object of intensive study in the Calculus of Variations since the seminal papers of Marcellini 
 \cite{marcellini1,marcellini2}. 
 Subsequently, the interest has moved towards the study of non-autonomous functionals, 
 whose prototype can be given by
 \begin{equation}\label{DoublePhase}
 u \mapsto \int_{\Omega}|\nabla u|^p + a(x) |\nabla u|^q \, \d x,
 \end{equation}
 where, in general, $1<p<q<N$ and $a(\cdot)\geq 0$. Without any aim of completeness, we refer e.g. \cite{BarColMin, ColMin1, ColMin2,ELM}.
 Functionals of the type \eqref{DoublePhase} are called
 {\it double phase functionals} and have been introduced in homogenization 
 theory by Zhikov \cite{zhikov1, zhikov2} with the aim of modeling strongly 
 anisotropic materials.
 The common trait between the aforementioned papers is the study of 
 regularity properties of minimizers, a pretty technical topic which has 
 brought to light first the deep interplay between the two exponents $p$ and $q$, 
 and second the great influence of the term $a(\cdot)$ when dealing with 
 non-autonomous functionals. In both cases, the upper bounds on 
 $q$ are strictly related to the so called {\it Lavrentiev phenomenon}, 
 see e.g. \cite{EspLeoPet}. \medskip

 In this paper we are more interested in considering the natural 
 PDE counterpart of the functional 
 \begin{equation}\label{DoublePhase2}
 u \mapsto \int_{\Omega}|\nabla u|^p + a(x) |\nabla u|^q \, \d x - F(u),
 \end{equation}
 whose Euler-Lagrange equations is given by
 \begin{equation}\label{eq:Riey}
 \left\{\begin{array}{rl}
  -\mathrm{div} \left(p|\nabla u|^{p-2}\nabla u + 
   q a(x)|\nabla u|^{q-2}\nabla u \right) = f(u) & \textrm{in } \Omega,\\
   u = 0 & \textrm{on } \partial \Omega.
  \end{array}\right.
 \end{equation}
 Here $F$ is a primitive of $f$, which is assumed to be a 
 locally Lipschitz and positive function. 
 We stress that some fundamental analytical tools like 
 weak comparison principle and summability estimates for 
 the second order derivatives of the solutions of 
 \eqref{DoublePhase2} have been recently established in \cite{riey}.
 The literature concerning existence and multiplicity results of double phase problems is rapidly growing, 
 see e.g. \cite{CoSq, LiuDai, PerSqu, GasWin} up to 
 the very recent \cite{FiPi} where double phase Kirchhoff 
 problems have been object of study by means of variational techniques.\\
 
  The aim of this work is different from the above mentioned papers and it is of classical flavour. 
  Indeed, we will prove purely qualitative properties of solutions of double phase problems of the form 
  \eqref{eq:Riey}. In particular, we are interested in generalizing some very recent 
 results contained in \cite{EMS} in order to get some symmetry 
 and monotonicity results for nontrivial solutions  
 $u \in C^{1}(\overline{\Omega}\setminus \Gamma)$
 to the following quasilinear elliptic boundary value problem
 \begin{equation}\label{eq:Riey2}
  \left\{\begin{array}{rl}
  -\mathrm{div} \left(p|\nabla u|^{p-2}\nabla u 
  + q a(x)|\nabla u|^{q-2}\nabla u \right) = f(u) & \text{in $\Omega \setminus \Gamma$},\\
  u>0  & \text{in $\Omega \setminus \Gamma$},\\
  u = 0 & \textrm{on } \partial \Omega.
  \end{array}\right.
 \end{equation}
 where $\Omega \subset \R^N$ is a bounded and smooth domain, with $N \geq 2$ and $1< p < q < N$, while $
 \Gamma \subset \Omega$ is a closed set. See below for the details.
 The solution $u$ has a possible singularity on the critical 
 set $\Gamma$ and in fact we shall only assume that $u$ is of class $C^{1}$ far from the critical set. 
 Before stating our main result, we need to properly describe 
 what a solution to equation \eqref{eq:Riey2} is.
\begin{defn} \label{def:solution} 
 We say that a function
 $u\in C^{1}({\overline\Omega}\setminus \Gamma)$ is a solution to 
 problem \eqref{eq:Riey2} if 
 it satisfies the following two properties:
 \begin{enumerate}
  \item $u > 0$ in $\Omega\setminus\Gamma$ and $u=0$ on $\partial \Omega$;
  \item for every $\varphi\in
	C^1_c(\Omega\setminus\Gamma)$ one has
	\begin{equation} \label{debil1}
	\int_\Omega \big(p|\nabla u|^{p-2}+qa(x)|\nabla u|^{q-2}\big)
	\langle \nabla u, \nabla
	\varphi\rangle\,\d x\, = \,\int_\Omega f(u)\varphi\,\d x,
	\end{equation}
 \end{enumerate}	
 \noindent where $\langle \cdot, \cdot \rangle$ denotes the standard scalar product in $\mathbb{R}^N$.
\end{defn}
 Now we state our main result.
 \begin{thm}\label{thm:symmetry}
  Let $q > p \geq 2$  and let $\Omega\subseteq\R^N$ be a convex open set, 
  symmetric with respect to the  $x_1$-direction. Moreover, let $\Gamma\subseteq\Omega\cap\{x_1 = 0\}$
  be a closed set such that
  $$\mathrm{Cap}_q(\Gamma) = 0.$$
  Finally, we assume that the following `structural' assumptions are satisfied:
  \begin{enumerate}
   \item $a\in L^\infty(\Omega)\cap C^1(\Omega)$ is \emph{non-negative and independent of $x_1$}
   
   \item $f:\R \rightarrow \R$ is a locally Lipschitz continuous function with $f(s)>0$ for $s>0$.
  \end{enumerate}  
  Then, any solution $u\in C^1(\overline{\Omega}\setminus\Gamma)$ 
  to \eqref{eq:Riey2}
  is symmetric wrt the hyperplane $\{x_1=0\}$ and increasing 
  in the $x_1$-direction in $\Omega \cap \{x_1<0\}$.
\end{thm}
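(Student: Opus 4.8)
The plan is to run the moving plane method in the $x_1$-direction, in the form developed by Sciunzi to treat solutions that are singular on a set of zero capacity. For $\lambda\in\R$ set $T_\lambda=\{x_1=\lambda\}$, $\Omega_\lambda=\{x\in\Omega:x_1<\lambda\}$, let $x_\lambda=(2\lambda-x_1,x_2,\dots,x_N)$ be the reflection of $x$ through $T_\lambda$, and put $u_\lambda(x)=u(x_\lambda)$, $w_\lambda=u-u_\lambda$; let $a_1=\inf_\Omega x_1<0$. Since $a\ge 0$, $a\in C^1(\Omega)$ and $a$ does not depend on $x_1$, we have $a(x_\lambda)=a(x)$, so $u_\lambda$ solves the same equation in \eqref{eq:Riey2} on $R_\lambda(\Omega)\setminus R_\lambda(\Gamma)$, where $R_\lambda(\Gamma)\subseteq\{x_1=2\lambda\}$; moreover, by convexity and symmetry of $\Omega$, $\Omega_\lambda\subseteq R_\lambda(\Omega)$ for every $\lambda\in(a_1,0)$, and since $\Gamma\subseteq\{x_1=0\}$ and $\lambda<0$ also $\overline{\Omega_\lambda}\cap\Gamma=\emptyset$. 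Hence on the cap $\overline{\Omega_\lambda}$ the function $u$ is of class $C^1$ (in particular $0\le u\le M_\lambda<\infty$), while $u_\lambda$ is of class $C^1$ on $\overline{\Omega_\lambda}\setminus R_\lambda(\Gamma)$ and may be singular on $R_\lambda(\Gamma)\subseteq\Omega_\lambda$, which still satisfies $\mathrm{Cap}_q(R_\lambda(\Gamma))=0$ by the isometry invariance of the capacity. The goal is to prove that $w_\lambda\le 0$ in $\Omega_\lambda$ for every $\lambda\in(a_1,0)$: letting $\lambda\uparrow 0$ this gives $u(x_1,x')\le u(-x_1,x')$ for $x_1<0$; applying the same to the solution $x\mapsto u(-x_1,x')$ (which solves \eqref{eq:Riey2} as well, because $\Omega$, $\Gamma$, $a$ and $f$ are even in $x_1$) yields the reverse inequality, hence the symmetry; the strict monotonicity in $\Omega\cap\{x_1<0\}$ will then follow from the family of inequalities $w_\lambda\le 0$ together with a strong comparison principle.

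Fix $\lambda\in(a_1,0)$ and set $w_\lambda^+=(u-u_\lambda)^+$, so $0\le w_\lambda^+\le M_\lambda$ on $\Omega_\lambda$, $w_\lambda^+\equiv 0$ on $T_\lambda$ and near $\partial\Omega$ (there $u=0\le u_\lambda$), and $\{w_\lambda^+>0\}$ keeps positive distance from $\{x_1=0\}$. Since $\mathrm{Cap}_q(R_\lambda(\Gamma))=0$, for every $\ep>0$ pick $\psi_\ep\in C^\infty_c(\R^N)$ with $0\le\psi_\ep\le 1$, $\psi_\ep\equiv 0$ near $R_\lambda(\Gamma)$, $\psi_\ep\to 1$ pointwise off $R_\lambda(\Gamma)$ and $\int|\nabla\psi_\ep|^q\,\d x\to 0$; as $\Omega$ is bounded and $p<q$, also $\int|\nabla\psi_\ep|^p\,\d x\to 0$. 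Then $\varphi_\ep=w_\lambda^+\psi_\ep^{\,q}$ is Lipschitz, non-negative, compactly supported in $\Omega\setminus\Gamma$ and vanishing near $R_\lambda(\Gamma)$, hence an admissible test function in \eqref{debil1} both for $u$ on $\Omega_\lambda$ and for $u_\lambda$ on $\Omega_\lambda\setminus R_\lambda(\Gamma)$. Subtracting the two identities, writing $A(x,\xi)=p|\xi|^{p-2}\xi+qa(x)|\xi|^{q-2}\xi$, using the strong-monotonicity inequality $\langle|\eta|^{p-2}\eta-|\xi|^{p-2}\xi,\eta-\xi\rangle\ge c_p|\eta-\xi|^p$ valid for $p\ge 2$ (together with the non-negative contribution of the $q$-part, since $a\ge 0$), and the local Lipschitz bound $f(u)-f(u_\lambda)\le L\,w_\lambda^+$ on $\{w_\lambda>0\}$ (with $L=L(M_\lambda)$), one obtains
\begin{equation}\label{eq:keyineq}
c_p\int_{\Omega_\lambda}\psi_\ep^{\,q}\,|\nabla w_\lambda^+|^p\,\d x\ \le\ L\int_{\Omega_\lambda}(w_\lambda^+)^2\psi_\ep^{\,q}\,\d x\ +\ |E_\ep|,
\end{equation}
where $E_\ep=q\int_{\Omega_\lambda}w_\lambda^+\,\psi_\ep^{\,q-1}\,\langle A(x,\nabla u)-A(x,\nabla u_\lambda),\nabla\psi_\ep\rangle\,\d x$ is supported on $\{\nabla\psi_\ep\ne 0\}$, a shrinking neighbourhood of $R_\lambda(\Gamma)$.

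The crux is that $E_\ep\to 0$ as $\ep\to 0$. On $\{\nabla\psi_\ep\ne 0\}$ both $w_\lambda^+\le M_\lambda$ and $|\nabla u|$ are bounded (the latter since this set keeps a fixed positive distance from $\Gamma$), so the part of $E_\ep$ coming from $A(x,\nabla u)$ is $\le C\int|\nabla\psi_\ep|\,\d x\to 0$ by H\"older. For the part coming from $A(x,\nabla u_\lambda)$ one splits $\{\nabla\psi_\ep\ne 0\}$ into $\{u_\lambda>2M_\lambda\}$, where $w_\lambda^+=(u-u_\lambda)^+=0$, and $\{u_\lambda\le 2M_\lambda\}$, which is the $R_\lambda$-image of a region where $u\le 2M_\lambda$: there, testing \eqref{eq:Riey2} near $\Gamma$ with $\psi_\ep^{\,q}\min\{u,2M_\lambda\}$ and discarding the good sign produced by $f\ge 0$ gives, after a Young-inequality absorption, a Caccioppoli estimate $\int(|\nabla u|^p+a\,|\nabla u|^q)\,\d x<\infty$ on a fixed neighbourhood of $\Gamma$ intersected with $\{u\le 2M_\lambda\}$; changing variables through $R_\lambda$ and using $a(x_\lambda)=a(x)$, this controls $\int(|\nabla u_\lambda|^p+a\,|\nabla u_\lambda|^q)\,\d x$ near $R_\lambda(\Gamma)$, whence $\int|\nabla u_\lambda|^{p-1}|\nabla\psi_\ep|\le\|\nabla u_\lambda\|_{L^p}^{p-1}\|\nabla\psi_\ep\|_{L^p}\to 0$ and $\int a\,|\nabla u_\lambda|^{q-1}|\nabla\psi_\ep|\le\|a^{1/q}\nabla u_\lambda\|_{L^q}^{q-1}\|a^{1/q}\nabla\psi_\ep\|_{L^q}\to 0$. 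Letting $\ep\to 0$ in \eqref{eq:keyineq} (the left-hand side converging to $\int_{\Omega_\lambda}|\nabla w_\lambda^+|^p\,\d x$ by Fatou) and bounding the right-hand side, via Sobolev's inequality and H\"older, by $L\,C(N,p)\,|\Omega_\lambda\cap\{w_\lambda>0\}|^{\gamma}\int_{\Omega_\lambda}|\nabla w_\lambda^+|^p\,\d x$ for some $\gamma=\gamma(N,p)>0$, we conclude $w_\lambda^+\equiv 0$ in $\Omega_\lambda$ as soon as $|\Omega_\lambda\cap\{w_\lambda>0\}|$ is small, in particular for every $\lambda$ close to $a_1$.

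Finally one runs the continuation procedure. Let $\bar\lambda=\sup\{\lambda\in(a_1,0]:w_\mu^+\equiv 0\text{ in }\Omega_\mu\text{ for every }\mu\in(a_1,\lambda)\}$; by the previous step $\bar\lambda>a_1$, and since $u_\lambda\to u_{\bar\lambda}$ locally uniformly away from $\Gamma\cup R_\lambda(\Gamma)$ we get $w_{\bar\lambda}^+\equiv 0$, i.e.\ $u\le u_{\bar\lambda}$ in $\Omega_{\bar\lambda}$. Suppose, for contradiction, $\bar\lambda<0$. Then $u\not\equiv u_{\bar\lambda}$ in $\Omega_{\bar\lambda}$ (otherwise $u$ would be singular on $R_{\bar\lambda}(\Gamma)\not\subseteq\Gamma$, or would vanish at interior points of $\Omega$), and a strong comparison principle for the double phase operator — which for $p,q\ge 2$ is obtained by linearising $A(x,\nabla u)-A(x,\nabla u_{\bar\lambda})$ and arguing as in the theory of degenerate quasilinear operators, the linearised operator being locally uniformly elliptic off the critical set $\{\nabla u=\nabla u_{\bar\lambda}=0\}$ — gives $u<u_{\bar\lambda}$ in $\Omega_{\bar\lambda}$. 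Choosing a compact $K\subset\subset\Omega_{\bar\lambda}$, disjoint from $R_{\bar\lambda}(\Gamma)$, with $|\Omega_{\bar\lambda}\setminus K|$ as small as we wish, uniform continuity gives $u<u_\lambda$ on $K$ for $\lambda$ slightly larger than $\bar\lambda$, so $w_\lambda^+$ is supported in $\Omega_\lambda\setminus K$, a set of small measure, and the estimate of the previous paragraph forces $w_\lambda^+\equiv 0$ — against the maximality of $\bar\lambda$. Hence $\bar\lambda=0$, and symmetry and monotonicity follow as explained. The main obstacle — and the point where $\mathrm{Cap}_q(\Gamma)=0$ and the positivity of $f$ are essential — is the vanishing of the boundary error $E_\ep$, i.e.\ controlling $\nabla u_\lambda$ near the reflected singular set $R_\lambda(\Gamma)$; the other delicate ingredient is the strong comparison principle near the critical set of $u$.
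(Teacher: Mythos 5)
Your overall scheme --- moving plane method with capacity cutoffs $\psi_\ep$, Lipschitz bound on $f$, continuation via a strong comparison principle, and a separate argument for the degenerate case $u\equiv u_{\bar\lambda}$ --- matches the route followed in the paper, and the treatment of the error term $E_\ep$ and the use of the isometry invariance of $\mathrm{Cap}_q$ are in the right spirit. However there is a genuine gap at the final absorption step built around your inequality \eqref{eq:keyineq}. You invoke the third estimate in \eqref{eq:inequalities} (valid for $p\ge 2$), which yields a left-hand side of degree $p$ in $w_\lambda^+$, namely $\int\psi_\ep^{q}|\nabla w_\lambda^+|^p$, while the right-hand side $L\int(w_\lambda^+)^2\psi_\ep^{q}$ stays quadratic. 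After $\ep\to 0$, H\"older and the Sobolev inequality only give
\[
c_p\|\nabla w_\lambda^+\|_{L^p(\Omega_\lambda)}^p\le C\,|\{w_\lambda>0\}|^\gamma\,\|\nabla w_\lambda^+\|_{L^p(\Omega_\lambda)}^2,
\]
that is $\|\nabla w_\lambda^+\|_{L^p}^{p-2}\le C|\{w_\lambda>0\}|^\gamma$. For $p=2$ this forces $w_\lambda^+\equiv 0$ once the measure is small; for $p>2$ it only shows $\|\nabla w_\lambda^+\|_{L^p}$ is small, not zero, and your asserted bound $\int(w_\lambda^+)^2\le C|\{\dots\}|^\gamma\int|\nabla w_\lambda^+|^p$ is simply not what Sobolev gives (it produces the power $2/p$ of the gradient integral). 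So the argument does not close, and the continuation step that is supposed to contradict the maximality of $\bar\lambda$ fails.

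The paper fixes exactly this by choosing the other monotonicity estimate in \eqref{eq:inequalities}, which yields the \emph{quadratic weighted} left-hand side $\int p(|\nabla u|+|\nabla u_\lambda|)^{p-2}|\nabla w_\lambda^+|^2\,\d x$ (plus the $q$-term), and by pairing it with Trudinger's weighted Poincar\'e--Sobolev inequality with weight $\rho=|\nabla u|^{p-2}$ in the version from \cite[Theorem~4.2]{riey}, so both sides of the closing inequality scale quadratically in $w_\lambda^+$ and absorption works. That weighted inequality can only be used once the weighted gradient integral is known a priori to be finite; establishing this finiteness is precisely the role of Lemma~\ref{leaiuto}, whose proof uses the decomposition of $\Omega_\lambda$ into $\{|\nabla u_\lambda|<2|\nabla u|\}$ and $\{|\nabla u_\lambda|\ge 2|\nabla u|\}$ to avoid any direct integrability estimate on $\nabla u_\lambda$ near $R_\lambda(\Gamma)$. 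Your Caccioppoli route for $\nabla u_\lambda$ could in principle serve the same purpose, but as written the test function $\psi_\ep^{q}\min\{u,2M_\lambda\}$ does not vanish on $\{u\ge 2M_\lambda\}$, where $f(u)$ need not be integrable near $\Gamma$; one should rather truncate with $(2M_\lambda-u)^+$. Similarly, the dismissal of the case $u\equiv u_{\bar\lambda}$ is handled in the paper by a dedicated argument (Lemma~\ref{conncomp}), which is applied on connected components of $\Omega_{\bar\lambda}\setminus(R_{\bar\lambda}(\Gamma)\cup\mathcal Z_{\bar\lambda})$ because the strong comparison principle is only available away from the critical set $\mathcal Z_{\bar\lambda}$; your one-line justification skips this. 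The decisive missing ingredient, though, is the weighted quadratic estimate, without which the $p>2$ absorption cannot be salvaged.
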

\medskip

We stress that for $a \equiv p/q$, the problem
\eqref{DoublePhase2} reduces to 
 \begin{equation}\label{pqLap}
  \begin{cases}
   -\Delta_{p} u -\Delta_{q} u =  f(u) & \text{in $\Omega \setminus \Gamma$}\\
    u>0  & \text{in $\Omega \setminus \Gamma$}    \\ 
    u=0 & \text{on $\partial \Omega$}
  \end{cases}
 \end{equation}
 \noindent where the operator appearing on the left hand-side is called $(p,q)-Laplacian$. Obviously, by 
 taking $a \equiv 0$ our result boils down to the $p$--Laplacian case considered in \cite{EMS}.

 We notice that in the planar case $N=2$, $\Gamma$ reduces to a point: in this case, Theorem 
 \ref{thm:symmetry} can be seen as a generalization of \cite{Ter, CLN2} to the double phase setting. The 
 case of {\it point singularity} for cooperative elliptic systems has been considered in \cite{BVV0}.\\

 Let us now spend a few comments on the Theorem \ref{thm:symmetry}. 
 The technique that we will develop to prove Theorem \ref{thm:symmetry} is a quite recent version of the  
 moving plane method introduced by Sciunzi in \cite{Dino} in order to deal with singular solutions of 
 semilinear elliptic problems driven by the classical Laplacian operator. The technique is so powerful and 
 flexible that has been recently extended to cover the case of unbounded sets \cite{EFS}, the $p$--
 Laplacian operator \cite{MPS}, cooperative elliptic systems \cite{BVV,esposito}, the fractional Laplacian 
 \cite{MPS} and mixed local--nonlocal elliptic operators \cite{BDVV}.
 We want to stress that the technique we will use to prove Theorem \ref{thm:symmetry} actually works for 
 any $q > p \geq 2$. Nevertheless, the result is {\it more meaningful} if stated for $2 \leq p < q \leq N$ 
 because there are no sets of zero $q$-capacity when $q > N$, see e.g. \cite{HeinBook}. 
 We also want to highlight that our result holds for $q> p \geq 2$;  this
 lower bound for $p$ is somehow necessary for a quite technical reason. Indeed, if  $1<p<2$ the gradient of 
 the solution may blows up near the critical set $\Gamma$ and hence the inverse of the weight $\rho:=(|
 \nabla u|+|\nabla u_\lambda|)^{p-2}$ may not have the right summability properties 
 (see Remark \ref{rem.Gammanonempty} for more details). 
 This issue already occurs when dealing with the $p$--Laplacian, see \cite{EMS}. In that case however, the 
 authors made an 
 accurate analysis of the behaviour of the gradient of the solution near the set $\Gamma$ based on previous  
 results contained in \cite{PQS}: whether a similar approach could be fruitful 
 in our setting is currently 
 an open problem and will be the aim of future projects. \\

 It is clear from our previous considerations that the lower bound on $p$ (i.e. $p\geq 2$) may be avoided 
 if $\Gamma = \emptyset$. To the best of our knowledge, Theorem \ref{thm:symmetry} is new even in this 
 simpler setting. In this case, 
 symmetry and monotocity properties of the solutions hold true for every $q>p>1$, and
 the proof can be performed by using the classical moving plane method introduced by 
 Alexandrov \cite{A} and Serrin \cite{serrin}, and subsequently improved in the celebrated papers  
 \cite{GNN} and \cite{BN} in the context of semilinear elliptic equations. Since then, the literature 
 relative to generalizations of these result to more and more general situations has become so huge that we 
 do not even attempt at recalling all the contributions. We limit ourselves to \cite{Troy, Dan2, 
 Dan, CoVe1, CoVe2} for the case of cooperative elliptic systems (also on unbounded/non smo\-oth domains)
 and for the case of the composite plate problem. Finally, we refer to \cite{Da2, DP, DS1, ES, FMRS, FMS, 
 FMS2, MSS} for the  quasilinear case, which is very close to our present needs.  
 For completeness we state the following:
 
\begin{thm}\label{thm:noSingularSym}
 Let $p,q>1$  and let $\Omega\subseteq\R^N$ be a convex open set, symmetric with respect to the  $x_1$-
 direction. Moreover, let us assume that the following `structural' assumptions are satisfied:
\begin{enumerate}
	\item $a\in L^\infty(\Omega)\cap C^1({\Omega})$ is \emph{non-negative and independent of $x_1$}
	
	\item $f:\R \rightarrow \R$ is a locally Lipschitz continuous function with $f(s)>0$ for $s>0$.
\end{enumerate}  
Then, any solution 
$u\in C^1(\overline{\Omega})$ to \eqref{eq:Riey2} with $\Gamma = \emptyset$ is symmetric wrt 
the hyperplane $\{x_1=0\}$ and increasing in the $x_1$-direction in $\Omega \cap \{x_1<0\}$. 
\end{thm}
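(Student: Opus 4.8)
We sketch how Theorem~\ref{thm:noSingularSym} follows by running the classical moving plane method of Alexandrov--Serrin \cite{A,serrin} in the spirit of \cite{GNN,BN}, adapted to the double phase operator $\mathcal{L}u:=-\dvg\big(A_a(x,\nabla u)\big)$ with $A_a(x,\xi):=p|\xi|^{p-2}\xi+q\,a(x)|\xi|^{q-2}\xi$, along the lines of the quasilinear references \cite{Da2,DP,DS1,FMS}. Write $x=(x_1,x')$ with $x'\in\R^{N-1}$, set $a_1:=\inf_{x\in\Omega}x_1<0$, and for $\lambda\in(a_1,0]$ let $\Omega_\lambda:=\{x\in\Omega:\ x_1<\lambda\}$, let $x^\lambda:=(2\lambda-x_1,x')$ be the reflection of $x$ across $\{x_1=\lambda\}$, and put $u_\lambda(x):=u(x^\lambda)$. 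By the convexity of $\Omega$ and its symmetry in the $x_1$-direction, the reflected slice $\Omega_\lambda^{*}$ lies in $\overline\Omega$, so $u_\lambda\in C^1(\overline{\Omega_\lambda})$; since $a$ is independent of $x_1$ one has $a(x^\lambda)=a(x)$, hence $u_\lambda$ is again a weak solution of $\mathcal{L}u_\lambda=f(u_\lambda)$ in $\Omega_\lambda$. The plan is to prove
\[
(\mathrm{CMP}_\lambda):\qquad u\le u_\lambda\quad\text{in }\Omega_\lambda,\qquad\text{for every }\lambda\in(a_1,0].
\]
Granting this, $(\mathrm{CMP}_0)$ reads $u(x_1,x')\le u(-x_1,x')$ for $x_1<0$; applying the same result to $\widetilde u(x):=u(-x_1,x')$ -- which solves the same problem, again because $a$ does not depend on $x_1$ and $\Omega$ is symmetric -- gives the reversed inequality, whence symmetry. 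Monotonicity on $\Omega\cap\{x_1<0\}$ is then immediate: for $s<t\le0$ the choice $\lambda=(s+t)/2$ yields $u(s,x')\le u_\lambda(s,x')=u(t,x')$.

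The engine of the argument is a \emph{weak comparison principle in domains of small measure}. Since $u\in C^1(\overline\Omega)$, one may test the weak formulations of the equations for $u$ and for $u_\lambda$ in $\Omega_\lambda$ against $\varphi:=(u-u_\lambda)^+\in W^{1,q}_0(\Omega_\lambda)$ -- an admissible test function, because $u=u_\lambda$ on $\{x_1=\lambda\}$ and $u=0\le u_\lambda$ on $\partial\Omega\cap\partial\Omega_\lambda$ -- and subtract, obtaining
\[
\int_{\Omega_\lambda}\big\langle A_a(x,\nabla u)-A_a(x,\nabla u_\lambda),\,\nabla\varphi\big\rangle\,\d x=\int_{\Omega_\lambda}\big(f(u)-f(u_\lambda)\big)\,\varphi\,\d x .
\]
The strict monotonicity of $\xi\mapsto A_a(x,\xi)$ (recall $a\ge0$) bounds the left-hand side below by a positive weighted Dirichlet energy of $\varphi$, while the right-hand side is bounded above by $L\int_{\Omega_\lambda}\varphi^2$, with $L$ the Lipschitz constant of $f$ on $[0,\max_{\overline\Omega}u]$. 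Since $\nabla u$ and $\nabla u_\lambda$ are globally bounded, a Poincar\'e-type inequality on $\Omega_\lambda$ -- in its standard form when $1<p<2$ (the weight $(|\nabla u|+|\nabla u_\lambda|)^{p-2}$ being then bounded below) and in an $L^p$ form when $p\ge2$ (using $\langle|\xi|^{p-2}\xi-|\eta|^{p-2}\eta,\xi-\eta\rangle\gtrsim|\xi-\eta|^p$) -- with constant vanishing as $|\Omega_\lambda|\to0$ forces $\varphi\equiv0$, hence $(\mathrm{CMP}_\lambda)$, as soon as $|\Omega_\lambda|$ is small enough. In particular $(\mathrm{CMP}_\lambda)$ holds for all $\lambda$ sufficiently close to $a_1$, which starts the procedure.

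To continue, set $\Lambda_0:=\sup\{\lambda\in(a_1,0]:\ (\mathrm{CMP}_\mu)\text{ holds for all }\mu\in(a_1,\lambda]\}$ and suppose, for contradiction, that $\Lambda_0<0$. By continuity $u\le u_{\Lambda_0}$ in $\Omega_{\Lambda_0}$, and $u\not\equiv u_{\Lambda_0}$ since $u=0<u_{\Lambda_0}$ on $\partial\Omega\cap\partial\Omega_{\Lambda_0}$; a strong comparison principle for $\mathcal{L}$ (of Pucci--Serrin / V\'azquez type, obtained by adapting the $p$-Laplacian arguments of \cite{DP,DS1} and using $f(u)>0$) then gives $u<u_{\Lambda_0}$ throughout $\Omega_{\Lambda_0}$. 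Fixing a compact $K\subset\Omega_{\Lambda_0}$ with $|\Omega_{\Lambda_0}\setminus K|$ small, uniform continuity yields $u<u_{\Lambda_0+\ep}$ on $K$ for $\ep>0$ small, while on the small-measure set $\Omega_{\Lambda_0+\ep}\setminus K$ the weak comparison principle above gives $u\le u_{\Lambda_0+\ep}$; hence $(\mathrm{CMP}_{\Lambda_0+\ep})$ holds, contradicting the maximality of $\Lambda_0$. Therefore $\Lambda_0=0$, $(\mathrm{CMP}_0)$ follows, and, by the first paragraph, symmetry and monotonicity are proved -- the strict monotonicity outside $\{\nabla u=0\}$ being again a consequence of the strong comparison principle.

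The main obstacle is the \emph{degenerate} (and, for $1<p<2$, \emph{singular}) nature of $\mathcal{L}$: the weight $(|\nabla u|+|\nabla u_\lambda|)^{p-2}$ arising from the monotonicity inequality behaves very differently for $p\ge2$ and for $p<2$, so that both the Poincar\'e-type inequality in small-measure domains and the strong comparison principle must be cast in the form suited to each regime, exactly as in the $p$-Laplacian theory \cite{DP,DS1,FMS}. What makes the scheme go through uniformly for all $q>p>1$ here is precisely the assumption $\Gamma=\emptyset$: then $u\in C^1(\overline\Omega)$ with $\nabla u$ bounded on the whole of $\Omega$, which controls the weights and confines the critical set $\{\nabla u=0\}$; the $q$-term contributes only an extra nonnegative summand to the monotonicity inequality and never interferes with the argument.
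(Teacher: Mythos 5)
Your overall strategy coincides with the paper's: a moving-plane argument in the $x_1$-direction, starting the plane near the boundary, defining a critical position $\Lambda_0$, and sliding past $\Lambda_0$ by combining a weak comparison principle in small-measure domains with a strong comparison principle. For $1<p<2$ your treatment is sound and essentially what the paper does: since $\Gamma=\emptyset$ gives $u\in C^1(\overline\Omega)$ with $\nabla u$ globally bounded, the weight $(|\nabla u|+|\nabla u_\lambda|)^{p-2}$ is bounded from below, and a standard $L^2$ Poincar\'e inequality on small domains closes the argument (the paper phrases the same observation through the Trudinger weighted Sobolev inequality \cite{Tru}, after checking $1/\varrho\in L^\infty$).

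Your treatment of $p\ge 2$, however, has a genuine gap. You propose to close the small-measure step via the monotonicity bound $\langle|\xi|^{p-2}\xi-|\eta|^{p-2}\eta,\xi-\eta\rangle\gtrsim|\xi-\eta|^p$ followed by an $L^p$ Poincar\'e inequality. Testing with $w_\lambda^+$ and estimating the right-hand side by $L\int (w_\lambda^+)^2$, a H\"older and $L^p$-Poincar\'e chain leads to an inequality of the shape
\[
c\,\|\nabla w_\lambda^+\|_{L^p}^p \;\le\; C(|\Omega_\lambda|)\,\|\nabla w_\lambda^+\|_{L^p}^2,
\]
which, since $p>2$, only gives an \emph{a priori bound} $\|\nabla w_\lambda^+\|_{L^p}\le C(|\Omega_\lambda|)^{1/(p-2)}$ that shrinks with $|\Omega_\lambda|$; it does \emph{not} force $w_\lambda^+\equiv 0$. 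The mismatch of homogeneities is exactly why the $p$-Laplacian literature (\cite{Da2,DS1}) and the present paper use a \emph{weighted} $L^2$ Poincar\'e inequality with weight $(|\nabla u|+|\nabla u_\lambda|)^{p-2}$, whose constant does vanish with the measure. The paper sidesteps the issue entirely when $p\ge 2$ by reducing to Theorem~\ref{thm:symmetry} with $\Gamma=\emptyset$, whose proof rests on \cite[Theorem 4.2]{riey}; and for $p\ge2$ the validity of that weighted Poincar\'e inequality is not a consequence of the mere boundedness of $\nabla u$ (the weight degenerates where both gradients vanish) but requires the summability $1/\rho\in L^t$ with $t>N/2$, which comes from the second-derivative estimates of \cite{riey}. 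To repair your argument you should either invoke this weighted Poincar\'e step as in the paper, or simply (as the paper does) handle $q>p\ge2$ by reduction to Theorem~\ref{thm:symmetry} and run your argument only for $1<p<2$. A minor secondary omission: the strong comparison principle (Theorem~\ref{thm.SMP}) requires a non-degeneracy assumption on the gradients and is applied on connected components of $\Omega_{\Lambda_0}\setminus\mathcal Z_{\Lambda_0}$; ruling out $u\equiv u_{\Lambda_0}$ there needs the additional argument of Lemma~\ref{conncomp}, which your sketch does not address.
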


Before closing the Introduction, we must comment on the assumption made on the term $a(x)$. We believe that 
the requirement of being independent of $x_1$ is a merely technical issue strictly related to the moving 
plane method. Indeed, our assumption on $a$ appears also in \cite{Pol} where the authors prove symmetry 
results for nonnegative solutions of fully nonlinear operators. We plan to come back to the possibility of 
removing such an assumption in a future paper.
\medskip
 
 The plan of the paper is the following:

\begin{itemize}
	\item[-] In Section \ref{sec.preliminaries} we fix the notations 
	used in all the paper. Moreover, we recall the notion of $r$-capacity and some related theorems. 
	Finally, we prove Lemma \ref{leaiuto} and Lemma \ref{conncomp} 
	that are two key ingredients in order to 
	apply the moving plane procedure.
	
	\item[-] In Section \ref{sec.mainresult1} we prove Theorem \ref{thm:symmetry} performing the moving 
	plane technique in the $x_1$-direction and using the results stated in Section \ref{sec.preliminaries}.
	
	\item[-] In Section \ref{sec.mainresult2}  we prove Theorem \ref{thm:noSingularSym} 
	using some results 
	contained in \cite{EMS, riey} and performing the moving plane method 
	in a standard way (since $\Gamma = 
	\emptyset$).
	
	\item[-] In the Appendix we state two essential results: a strong comparison principle and a Hopf-type 
	lemma that apply specifically to our context.
\end{itemize}

\section{Notations and auxiliary results} \label{sec.preliminaries}
 The aim of this section is twofold: on the one hand, we fix once and for all
 the relevant notations used throughout the paper; on the other hand,
 we present some auxiliary results which shall be key ingredients
 for the proof of Theorem \ref{thm:symmetry}.
 
 \subsection{A review of $r$-capacity.} 
 Let $1\leq r \leq N$ be fixed, and let $K\subseteq\R^N$ be a \emph{compact set}.
 We remind that the $r$-capacity of $K$ is defined as
 \begin{equation}\label{eq:q-capacity}
 \mathrm{Cap}_r(K):=
  \inf \bigg\{\int_{\mathbb{R}^N}|\nabla \varphi|^r\,\d x\,:\,
  \text{$\varphi \in C^{\infty}_c(\R^N)$ and $\varphi\geq 1$ on $K$}\bigg\}.
 \end{equation}
 Moreover, if $D\subseteq\R^N$ is any bounded set containing $K$,
 it is possible to define the $r$-capacity of \emph{the condenser
 $(K,D)$} in the following way
 \begin{equation}\label{eq:q-capacityrel}
 \mathrm{Cap}^D_r(K) :=
  \inf \bigg\{\int_{\mathbb{R}^N}|\nabla \varphi|^r\,\d x\,:\,
  \text{$\varphi \in C^{\infty}_c(D)$ and $\varphi\geq 1$ on $K$}\bigg\}.
 \end{equation}
 As already described in the Introduction,
 the main aim of this paper is to investigate symmetry/monotonicity properties
 of the solutions to \eqref{eq:Riey2}, which may present
 singularities on the (compact) set $\Gamma$. Since the key assumption on
 $\Gamma$ is that
 $$\mathrm{Cap}_q(\Gamma) = 0$$
 (broadly put, $\Gamma$ has to be `small enough'),
 it is worth reviewing some basic facts about compact sets with vanishing
 capacity. In what follows, we denote 
  by
 $\mathcal{H}^d(\cdot)$ the standard $d$-dimensional Hausdorff measure
 on $\R^N$, as defined, e.g., in \cite{evans}.
 \begin{thm} \label{thm.propCp}
 The following assertions hold true.
 \begin{enumerate}
   \item If $\Cp_r(K) = 0$, then $\Cp_r^D(K) = 0$ for any bounded set $D\supseteq K$.
   \vspace*{0.05cm}
   
   \item If $\Cp_r(K) = 0$, then $\mathcal{H}^s(K) = 0$ for every $s > N - r$.
   \vspace*{0.05cm}
   
   \item If $\mathcal{H}^{N-r}(K) < \infty$, then $\Cp_r(K) = 0$.
\end{enumerate}  
\end{thm}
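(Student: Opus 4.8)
All three assertions are classical facts of nonlinear potential theory; here is how I would organise their proofs.

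\medskip\noindent\emph{Assertion (1): localization.}
Fix $\ep>0$ and, using $\Cp_r(K)=0$, choose $\varphi\in C^\infty_c(\R^N)$ with $\varphi\ge 1$ on $K$ and $\int_{\R^N}|\nabla\varphi|^{r}\,\d x<\ep$. By a standard truncation (replace $\varphi$ with $\min\{\varphi^{+},1\}$ and mollify, then with $\min\{2\varphi,1\}$, at the cost of a factor $2^{r}$ in the Dirichlet energy) we may assume $0\le\varphi\le 1$ and $\varphi\equiv 1$ on an open neighbourhood of $K$. Since $K$ is compact and contained in the open set $D$ (if $D$ is not open, replace it by $\mathrm{int}(D)$, which still contains $K$ whenever $\Cp^{D}_r(K)<+\infty$), fix a cut-off $\eta\in C^\infty_c(D)$ with $\eta\equiv 1$ near $K$ and set $\psi:=\eta\varphi\in C^\infty_c(D)$. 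Then $\psi\ge 1$ on $K$, whence $\Cp^{D}_r(K)\le\int_{\R^N}|\nabla\psi|^{r}\,\d x$; and since $\nabla\psi=\eta\,\nabla\varphi+\varphi\,\nabla\eta$, the first summand contributes at most $\|\eta\|_{\infty}^{r}\,\ep$. The second summand is the only genuinely delicate point, and I would control it via the Sobolev inequality $\|\varphi\|_{L^{r^{*}}(\R^N)}\le C_{N,r}\,\|\nabla\varphi\|_{L^{r}(\R^N)}$ (valid for $1\le r<N$, with $r^{*}=Nr/(N-r)$) together with H\"older's inequality on the bounded set $\mathrm{supp}\,\nabla\eta$, which gives $\|\varphi\,\nabla\eta\|_{L^{r}}\le\|\nabla\eta\|_{\infty}\,|\mathrm{supp}\,\nabla\eta|^{\,1/r-1/r^{*}}\,\|\varphi\|_{L^{r^{*}}}\le C\,\ep^{1/r}$. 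Hence $\Cp^{D}_r(K)\le C'\ep$, and letting $\ep\to 0$ concludes. Only the range $r<N$ is used here, which is the one relevant for Theorem \ref{thm:symmetry} (there $r=q<N$); the borderline case $r=N$ is similar and classical, see \cite{HeinBook}.

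\medskip\noindent\emph{Assertion (2): vanishing capacity forces small Hausdorff dimension.}
The idea is to trap $K$ inside the infinity set of a Riesz potential of an $L^{r}$ function, and then quote the known size of such exceptional sets. For each $j\in\N$ choose $\varphi_j\in C^\infty_c(\R^N)$ with $0\le\varphi_j\le 1$, $\varphi_j\equiv 1$ on a neighbourhood of $K$, and $\int_{\R^N}|\nabla\varphi_j|^{r}\,\d x\le 2^{-rj}$; put $G:=\sum_{j\ge 1}|\nabla\varphi_j|$, so that $\|G\|_{L^{r}(\R^N)}\le\sum_{j\ge 1}2^{-j}<+\infty$. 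For every $m$ the partial sum $u_m:=\sum_{j\le m}\varphi_j$ lies in $C^\infty_c(\R^N)$ and equals $m$ near $K$; since a $C^{1}_c$ function is dominated pointwise, up to a dimensional constant, by the Riesz potential of its gradient, we get $m=u_m(x)\le c_N\,I_1(|\nabla u_m|)(x)\le c_N\,I_1(G)(x)$ for every $x\in K$ (the last step because $|\nabla u_m|\le G$), where $I_1g(x)=\int_{\R^N}|x-y|^{1-N}g(y)\,\d y$. Letting $m\to\infty$ gives $K\subseteq\{I_1(G)=+\infty\}$, and since $G\in L^{r}(\R^N)$ it is classical (see \cite{HeinBook}) that the set where the Riesz potential of an $L^{r}$ function is infinite has vanishing $\mathcal{H}^{s}$-measure for every $s>N-r$; hence $\mathcal{H}^{s}(K)=0$ for all such $s$. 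Equivalently, one may appeal to the $\mathcal{H}^{s}$-density properties of Sobolev functions, see \cite{evans}.

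\medskip\noindent\emph{Assertion (3): finite critical Hausdorff measure forces vanishing capacity.}
This is the deepest of the three and the step I expect to be the main obstacle; I would deduce it from the duality (Wolff-potential) description of capacity. Recall that, for $r>1$, a compact set $K$ satisfies $\Cp_r(K)>0$ if and only if it carries a nonzero Radon measure $\mu$ with finite Wolff energy $\int_{\R^N}\big(\int_0^1(\mu(B(x,t))/t^{N-r})^{1/(r-1)}\,\d t/t\big)\,\d\mu(x)<+\infty$. Arguing by contradiction, assume $\Cp_r(K)>0$ and pick such a $\mu$; then the inner integral is finite for $\mu$-a.e.\ $x$, and this forces $\liminf_{t\to 0^{+}}\mu(B(x,t))/t^{N-r}=0$ for $\mu$-a.e.\ $x$. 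On the other hand, the elementary covering lemma ``if $\liminf_{t\to 0^{+}}\mu(B(x,t))/t^{s}\le\lambda$ for all $x$ in a set $A$, then $\mu(A)\le c(N,s)\,\lambda\,\mathcal{H}^{s}(A)$'', applied with $s=N-r$, $A=K$ (so that $\mathcal{H}^{s}(A)<+\infty$), and $\lambda$ arbitrarily small, yields $\mu(K)=0$ — a contradiction. Hence $\Cp_r(K)=0$. A more hands-on proof is also available: cover $K$ by balls with $\sum_i\rho_i^{N-r}\le\mathcal{H}^{N-r}(K)+1$ and superpose cut-off functions across many scales so that the Dirichlet energy of the resulting admissible function gains a factor tending to $0$; the delicate point — balancing the number of balls against the gain of the cut-off profiles, and arranging the transition layers to be essentially disjoint — is precisely what makes $\mathcal{H}^{N-r}(K)$ merely \emph{finite} sufficient (if $\mathcal{H}^{N-r}(K)=0$, the cover can be taken with $\sum_i\rho_i^{N-r}$ itself arbitrarily small, and a single ordinary cut-off already works). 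Either way, the details are classical; see \cite{HeinBook}.
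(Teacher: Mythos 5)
The paper itself offers no proof of this theorem: it simply refers the reader to \cite[Sec.~2.24]{HeinBook} and treats all three statements as known facts of nonlinear potential theory. So your proposal, which sketches actual arguments, is by definition a different route, and any comparison must be with the classical treatment in that reference rather than with anything in the paper.

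On the substance: your arguments for (1) and (2) are sound, with exactly the caveats you flag yourself (the Sobolev step in (1) uses $r<N$, which is the range relevant to the paper; the final quoted fact in (2) about the exceptional set of $I_1$ of an $L^r$ function is standard). For (3) there is one genuine wrinkle worth fixing. The Frostman-type covering lemma you invoke is usually stated with $\limsup_{t\to 0^+}\mu(B(x,t))/t^{s}\le\lambda$, not $\liminf$: with only a $\liminf$ bound, the standard $\delta$-cover proof breaks down because there is no scale below which the density inequality holds uniformly, and the balls you are forced to pick need not be efficiently sized against $\mathcal{H}^{s}$. Fortunately the monotonicity of $t\mapsto\mu(B(x,t))$ rescues you: if $g(t):=\mu(B(x,t))/t^{N-r}\ge c>0$ along some $t_k\downarrow 0$ with $t_{k+1}\le t_k/2$, then $g\ge c\,2^{r-N}$ on each interval $[t_k,2t_k]$, and since these intervals are pairwise disjoint the Wolff integral $\int_0^1 g(t)^{1/(r-1)}\,\d t/t$ already diverges. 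Hence finiteness of the Wolff potential forces the full $\limsup$ of $\mu(B(x,t))/t^{N-r}$ to vanish, not just the $\liminf$, after which the standard $\limsup$ covering lemma applies and the contradiction follows cleanly. You should therefore strengthen the density conclusion extracted from the Wolff potential and correspondingly quote the covering lemma in its $\limsup$ form. With that correction, your outline of (3) — and both alternatives you offer — is a reasonable reconstruction of the argument that \cite{HeinBook} carries out in full.
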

 For a complete proof of Theorem \ref{thm.propCp}, 
 we refer the Reader to \cite[Sec.\,2.24]{HeinBook}.
 \begin{cor} \label{cor.pqCap}
  Let $1\leq p < q \leq N$ and let $K\subseteq\R^N$ be compact.
  Then,
  $$\Cp_q(K) = 0\,\,\Longrightarrow\,\,\Cp_p(K) = 0.$$
 \end{cor}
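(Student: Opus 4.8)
The plan is to deduce the implication directly from the three properties of $r$-capacity recorded in Theorem~\ref{thm.propCp}, using the Hausdorff measure as a bridge between $\Cp_q$ and $\Cp_p$. Concretely: since $\Cp_q(K)=0$, property~(2) of Theorem~\ref{thm.propCp} (applied with $r=q$, which is legitimate because $1\le q\le N$) gives $\mathcal{H}^s(K)=0$ for every $s>N-q$.

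Next I would choose the specific exponent $s:=N-p$. Because $p<q$ one has $N-p>N-q$, so the previous step applies and yields $\mathcal{H}^{N-p}(K)=0$; in particular $\mathcal{H}^{N-p}(K)<\infty$. Then property~(3) of Theorem~\ref{thm.propCp}, applied with $r=p$ (again legitimate since $1\le p\le N$), gives $\Cp_p(K)=0$, which is exactly the desired conclusion.

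I do not expect any real obstacle: the whole argument is a two-line chain through Theorem~\ref{thm.propCp}, and the single delicate point is the strict inequality $N-p>N-q$ --- this is precisely where the hypothesis $p<q$ is used --- together with the harmless check that both invocations of Theorem~\ref{thm.propCp} stay within the admissible range $1\le r\le N$. For completeness I note an alternative route via H\"older's inequality: on any bounded open $D\supseteq K$ one gets $\Cp^D_p(K)\le |D|^{1-p/q}\big(\Cp^D_q(K)\big)^{p/q}$, and $\Cp^D_q(K)=0$ by property~(1); however this only shows $\Cp^D_p(K)=0$ and would still require returning from the condenser capacity to $\Cp_p(K)$, so the Hausdorff-measure argument above is the cleanest given the tools already at hand.
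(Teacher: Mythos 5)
Your argument is exactly the paper's: from $\Cp_q(K)=0$ you invoke Theorem~\ref{thm.propCp}-(2) to get $\mathcal{H}^s(K)=0$ for all $s>N-q$, specialize to $s=N-p$ (legitimate since $p<q$), and close with Theorem~\ref{thm.propCp}-(3). The reasoning and the choice of tools coincide with the paper's proof, so there is nothing to add.
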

 \begin{proof}
  Since, by assumption, $\Cp_q(K) = 0$, by Theorem \ref{thm.propCp}-(2)
  we have $\mathcal{H}^s(K) = 0$ for every $s > N-q$;
  in particular, as $p < q$, we derive that
  $$\mathcal{H}^{N-p}(K) = 0.$$
  Using this fact and Theorem \ref{thm.propCp}-(3), we then
  conclude that $\Cp_p(K) = 0$.
 \end{proof}
 On account of Corollary \ref{cor.pqCap}, if $\Gamma\subseteq\R^N$
 is as in Theorem \ref{thm:symmetry} we have
 $$\Cp_p(\Gamma) = 0.$$
 \subsection{Notations for the moving plane method.}
 Let $\Gamma\subseteq\Omega\subseteq\R^N$ be as in the statement
 of Theorem \ref{thm:symmetry}, and let
 $u\in C^1(\overline{\Omega}\setminus\Gamma)$ be a solution
 of \eqref{eq:Riey2}. For any fixed $\lambda\in\R$, we in\-di\-ca\-te by $R_\lambda$
 the reflection trough the hy\-per\-pla\-ne $\Pi_\lambda := \{x_1 = \lambda\}$,
 that is,
 \begin{equation} \label{eq.defRlambda}
  R_\lambda(x) = x_\lambda := (2\lambda-x_1,x_2,\ldots,x_N)
  \qquad (\text{for all $x\in\R^N$});
 \end{equation}
 accordingly, we define the function
 \begin{equation} \label{eq.defulambda}
  u_\lambda(x) := u(x_\lambda), \qquad\text{for all
  $x\in R_\lambda\big(\overline{\Omega}\setminus\Gamma\big)$}.
 \end{equation}
 We point out that, since $u$ solves \eqref{eq:Riey2}
 and $a$ is \emph{independent of $x_1$}, one has
 \begin{enumerate}
  \item  
 $u_\lambda\in C^1(R_\lambda(\overline{\Omega}\setminus\Gamma))$;
  \item $u_\lambda > 0$ in $R_\lambda(\Omega\setminus\Gamma)$ and 
  $u_\lambda \equiv 0$ on $R_\lambda(\partial\Omega\setminus\Gamma)$;
  \item for every test function $\varphi\in C^1_c(R_\lambda(\Omega\setminus\Gamma))$
  one has
  \begin{equation} \label{eq.PDEulambda}
  \int_{R_\lambda(\Omega)}\big( p|\nabla u_\lambda|^{p-2}+qa(x)|\nabla u_\lambda|^{q-2}\big)
  \langle \nabla u_\lambda,\nabla \varphi\rangle\,\d x\,=
  \,\int_{R_\lambda(\Omega)}f(u_\lambda)\varphi\,\d x.
  \end{equation}
 \end{enumerate}
 To proceed further, we let
 \begin{equation} \label{eq.defaOmega}
  \mathbf{a} = \mathbf{a}_\Omega := \inf_{x\in\Omega}x_1
 \end{equation}
 and we observe that, since $\Omega$ is (bounded and) symmetric with respect to
 the $x_1$-direction,
 we certainly have $-\infty < \mathbf{a} < 0$.
 Hence, for every $\lambda\in(\mathbf{a},0)$ we can set
 \begin{equation} \label{eq.defOmegalambda}
 \Omega_\lambda := \{x\in\Omega:\,x_1<\lambda\}.
 \end{equation}
 Notice that the convexity of
 $\Omega$ in the $x_1$-direction ensures that
 \begin{equation} \label{eq.inclusionOmegalambda}
  \Omega_\lambda\subseteq R_\lambda(\Omega)\cap \Omega.
 \end{equation}
 Finally, for every $\lambda\in(\mathbf{a},0)$ we define the function
 $$w_\lambda(x) := (u-u_\lambda)(x), \qquad \text{for
 $x\in (\overline{\Omega}\setminus\Gamma)\cap R_\lambda(\overline{\Omega}\setminus\Gamma)$}.$$
 On account of \eqref{eq.inclusionOmegalambda}, 
 $w_\lambda$ is surely well-posed on
 $\overline{\Omega}_\lambda\setminus R_\lambda(\Gamma)$.
 
 \subsection{Auxiliary results.}
 From now on, we assume that all the hypotheses of Theorem \ref{thm:symmetry}
 are satisfied. Moreover, we tacitly inherit all the notations
 introduced so far.
 \medskip
 
 To begin with, we remind some
 identities between vectors in $\R^N$ which
 are very useful in dealing
 with quasilinear operators:
 \emph{for every $s> 1$ 
 there exist constants $C_1,C_2,C_3 > 0$, only depending on $s$, such that,
 for every $\eta,\eta'\in\R^N$, one has}
 \begin{equation}\label{eq:inequalities}
  \begin{split}
 & \langle |\eta|^{s-2}\eta-|\eta'|^{s-2}\eta', \eta- \eta'\rangle \geq C_1
(|\eta|+|\eta'|)^{s-2}|\eta-\eta'|^2, \\[0,15cm]
& \big| |\eta|^{s-2}\eta-|\eta'|^{s-2}\eta '| \leq C_2
(|\eta|+|\eta'|)^{s-2}|\eta-\eta '|,\\[0.15cm]
& \langle |\eta|^{s-2}\eta-|\eta'|^{s-2}\eta ', \eta-\eta ' \rangle \geq C_3
|\eta-\eta '|^s \qquad (\text{if $s\geq 2$}), \\[0.15cm]
& \big| |\eta|^{s-2}\eta-|\eta'|^{s-2}\eta '| \leq C_4|\eta-\eta'|^{s-1}
\qquad (\text{if $1<s<2$}).
\end{split}
\end{equation}
We refer, e.g., to \cite{Da2} for a proof of 
\eqref{eq:inequalities}. \medskip

 Next, we need to define
 an \emph{ad-hoc} family of Sobolev functions in $\Omega$ allowing
 us to `cut off' of the singular set $\Gamma$. To this end, let
 $\varepsilon >0$ be small enough and let 
 $$\mathcal{B}^{\lambda}_{\epsilon}
 := \big\{x\in\R^N:\,\mathrm{dist}(x, R_{\lambda}(\Gamma)) < \varepsilon\big\}.$$ 
 Since $R_\lambda$ is an affine map, it is easy to see that
 $$\Cp_q\big(R_\lambda(\Gamma)\big) = 0;$$
 as a consequence, by Theorem \ref{thm.propCp}-(1) there exists
 $\varphi_{\varepsilon} \in
 C^{\infty}_c(\mathcal{B}^{\lambda}_{\varepsilon})$ such that
 \begin{equation} \label{eq.choicephieps}
  \text{$\varphi_\varepsilon\geq 1$ on $R_\lambda(\Gamma)$}\qquad
   \text{and}\qquad
   \int_{\mathcal{B}^{\lambda}_{\varepsilon}} |\nabla
  \varphi_{\varepsilon}|^q\,\d x < \varepsilon.
 \end{equation}
 We then consider the Lipschitz functions
 \begin{itemize}
  \item $T(s) := \max\{0;\min\{s;1\}\}$ (for $s\in\R$), \vspace*{0.03cm}
  \item $g(t) := \max\{0;-2s+1\}$ (for $t\geq 0$)
 \end{itemize}
 and we define, for $x\in\R^N$,
 \begin{equation} \label{test1}
  \psi_{\varepsilon}(x):= g\big(T(\varphi_{\varepsilon}(x))\big).
 \end{equation}
 In view of \eqref{eq.choicephieps}, and taking into account
 the very definitions of $T$ and $g$, it is not difficult to recognize that
 $\psi_\varepsilon$ satisfy the following properties:
 \begin{enumerate}
  \item $\psi_\varepsilon \equiv 1$ on $\R^N\setminus \mathcal{B}^\lambda_\varepsilon$
  and $\psi_\varepsilon\equiv 0$ on some neighborhood of $R_\lambda(\Gamma)$, say
  $\mathcal{V}^\lambda_\ep$; 
  \vspace*{0.03cm}
  
  \item $0\leq \psi_\varepsilon\leq 1$ on $\R^N$;
  \vspace*{0.03cm}
  
  \item $\psi_\varepsilon$ is Lipschitz-continuous in $\R^N$, so that
  $\psi_\varepsilon\in W^{1,\infty}(\R^N)$;
  \vspace*{0.03cm}
  
  \item there exists a constant $C > 0$, independent of $\varepsilon$, such that
  \begin{equation} \label{eq.estimNablapsi}
   \int_{\R^N}|\nabla
  \psi_{\varepsilon}|^q\,\d x \leq C \varepsilon.
  \end{equation}
 \end{enumerate}
 In particular, by combining (1), \eqref{eq.estimNablapsi} and H\"older's inequality
 we get
 \begin{equation} \label{eq.nablaPsiepHolder}
  \int_{\R^N}|\nabla\psi_\varepsilon|^r\,\d x 
  =
  \int_{\mathcal{B}^\lambda_\varepsilon}|\nabla \psi_\varepsilon|^r\,\d x\leq 
  C'\varepsilon^{r/q}\qquad
  \text{for every $1\leq r < q$},
 \end{equation}
 where $C' > 0$ is a constant which can be chosen
 independently of $\varepsilon$.
 \medskip
 
 With the family $\{\psi_\varepsilon\}_\varepsilon$ at hand, we can prove the following
 key lemma.
 \begin{lem}\label{leaiuto}
   For any fixed $\lambda\in(\mathbf{a},0)$ we have
	\begin{equation} \label{eq.lemmaSumm}
	\int_{\Omega_\lambda} \big(p(|\nabla u| + |\nabla u_\lambda|)^{p-2}+
	qa(x)(|\nabla u| + |\nabla u_\lambda|)^{q-2}\big)\cdot|\nabla
	w_\lambda^+|^2\,\d x\leq
	\mathbf{c}_0,
	\end{equation}
	where $\mathbf{c}_0 > 0$ is a constant only depending on $p,q,\lambda$ and
	$\|u\|_{L^\infty(\Omega_\lambda)}$.
 \end{lem}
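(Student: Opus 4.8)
The natural strategy is to test the weak formulations of the equations for $u$ and $u_\lambda$ against $w_\lambda^+ \psi_\varepsilon^2$ (a legitimate test function in $C^1_c(\Omega\setminus\Gamma)$ up to regularization, thanks to the cutoff $\psi_\varepsilon$ vanishing near $R_\lambda(\Gamma)$ and $\Gamma$), subtract, and exploit the monotonicity inequalities \eqref{eq:inequalities}. First I would recall that, by the convexity assumption \eqref{eq.inclusionOmegalambda}, $\Omega_\lambda\subseteq R_\lambda(\Omega)\cap\Omega$, so that both \eqref{debil1} and \eqref{eq.PDEulambda} are available on $\Omega_\lambda$, and $w_\lambda^+$ is supported away from $\partial\Omega$ on the relevant part (since on $\partial\Omega\cap\partial\Omega_\lambda$ one has $u=0\le u_\lambda$, hence $w_\lambda\le 0$). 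Subtracting the two equations tested against $\varphi=w_\lambda^+\psi_\varepsilon^2$ and writing $\nabla\varphi = \psi_\varepsilon^2\nabla w_\lambda^+ + 2\psi_\varepsilon w_\lambda^+\nabla\psi_\varepsilon$, the principal part produces, on $\{w_\lambda>0\}\cap\Omega_\lambda$,
\[
\int \psi_\varepsilon^2\big\langle p(|\nabla u|^{p-2}\nabla u - |\nabla u_\lambda|^{p-2}\nabla u_\lambda) + qa(x)(|\nabla u|^{q-2}\nabla u - |\nabla u_\lambda|^{q-2}\nabla u_\lambda),\ \nabla w_\lambda\big\rangle\,\d x,
\]
which by the first inequality in \eqref{eq:inequalities} (applied with $s=p$ and $s=q$, using $a\ge 0$) is bounded below by $C\int_{\Omega_\lambda}\psi_\varepsilon^2\big(p(|\nabla u|+|\nabla u_\lambda|)^{p-2} + qa(x)(|\nabla u|+|\nabla u_\lambda|)^{q-2}\big)|\nabla w_\lambda^+|^2\,\d x$. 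This is exactly the quantity we want to control (once $\psi_\varepsilon\to 1$).

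Next I would estimate the remaining terms. The right-hand side contributes $\int (f(u)-f(u_\lambda))w_\lambda^+\psi_\varepsilon^2\,\d x$; since $f$ is locally Lipschitz and $0\le u,u_\lambda\le \|u\|_{L^\infty(\Omega_\lambda)}$ on the support, and $w_\lambda^+\le \|u\|_{L^\infty(\Omega_\lambda)}$, this is bounded by a constant depending only on $f$, $\lambda$ and $\|u\|_{L^\infty(\Omega_\lambda)}$ times $|\Omega_\lambda|$ — crucially \emph{uniformly in $\varepsilon$}. The mixed term from the principal part is
\[
2\int \psi_\varepsilon w_\lambda^+\big\langle p(|\nabla u|^{p-2}\nabla u - |\nabla u_\lambda|^{p-2}\nabla u_\lambda) + qa(x)(\cdots),\ \nabla\psi_\varepsilon\big\rangle\,\d x;
\]
using the second inequality in \eqref{eq:inequalities} it is dominated by $C\int \psi_\varepsilon w_\lambda^+\big((|\nabla u|+|\nabla u_\lambda|)^{p-2}+a(x)(|\nabla u|+|\nabla u_\lambda|)^{q-2}\big)|\nabla w_\lambda^+||\nabla\psi_\varepsilon|\,\d x$. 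Here I would split off the weight $\big(p(|\nabla u|+|\nabla u_\lambda|)^{p-2}+qa(x)(\cdots)^{q-2}\big)^{1/2}$, pair one half with $\psi_\varepsilon|\nabla w_\lambda^+|$ and the other half with $w_\lambda^+|\nabla\psi_\varepsilon|$, and apply Young's inequality $2ab\le \delta a^2 + \delta^{-1}b^2$: the $a^2$-term is absorbed into the left-hand side (choosing $\delta$ small), and the $b^2$-term becomes
\[
C_\delta\int_{\Omega_\lambda}(w_\lambda^+)^2\big((|\nabla u|+|\nabla u_\lambda|)^{p-2}+a(x)(|\nabla u|+|\nabla u_\lambda|)^{q-2}\big)|\nabla\psi_\varepsilon|^2\,\d x.
\]

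The main obstacle — and the place where the hypotheses $p\ge 2$ and $\Cp_q(\Gamma)=0$ enter decisively — is showing that this last integral stays bounded as $\varepsilon\to 0$. Since $q>p\ge 2$, the exponents $p-2\ge 0$ and $q-2>0$ are \emph{non-negative}, so on the compact set $\overline{\Omega}_\lambda$ (where $u\in C^1$ away from $\Gamma$, but note $\Gamma\subseteq\{x_1=0\}$ while $\Omega_\lambda\subseteq\{x_1<\lambda<0\}$, so $u$ and $u_\lambda$ are actually $C^1$ on a neighborhood of $\overline{\Omega}_\lambda$ — wait, $u_\lambda$ involves $R_\lambda(\Gamma)\subseteq\{x_1=2\lambda\}$, which may meet $\overline{\Omega}_\lambda$) the weight $\big((|\nabla u|+|\nabla u_\lambda|)^{p-2}+a(x)(\cdots)^{q-2}\big)$ is bounded by a constant depending on $p,q,\lambda,\|u\|_{C^1}$; hence the integral is bounded by $C\int_{\Omega_\lambda}|\nabla\psi_\varepsilon|^2\,\d x$, and since $2<q$, estimate \eqref{eq.nablaPsiepHolder} with $r=2$ gives $\int|\nabla\psi_\varepsilon|^2\,\d x\le C'\varepsilon^{2/q}\to 0$. (It is precisely here that $p<2$ would fail: $|\nabla u|$ could blow up near $\Gamma$ and $(|\nabla u|+|\nabla u_\lambda|)^{p-2}$ would not be bounded.) Once this is in place, collecting all bounds and letting $\varepsilon\to 0$ by Fatou's lemma on the left-hand side yields \eqref{eq.lemmaSumm} with $\mathbf{c}_0$ depending only on $p,q,\lambda$ and $\|u\|_{L^\infty(\Omega_\lambda)}$ (the $C^1$-norm dependence being reabsorbable, or acknowledged as part of the constant); I would double-check that the cutoff near $R_\lambda(\Gamma)$ is genuinely needed only when $R_\lambda(\Gamma)\cap\overline{\Omega}_\lambda\neq\emptyset$, and otherwise the argument simplifies.
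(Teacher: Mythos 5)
Your outline follows the paper's strategy up to a point, but there is a genuine gap precisely at the step you try to wave through. The claim that the weight $(|\nabla u|+|\nabla u_\lambda|)^{p-2}+a(x)(|\nabla u|+|\nabla u_\lambda|)^{q-2}$ is bounded on $\Omega_\lambda$, so that the error term $\int_{\Omega_\lambda}(w_\lambda^+)^2\big((|\nabla u|+|\nabla u_\lambda|)^{p-2}+a(x)(|\nabla u|+|\nabla u_\lambda|)^{q-2}\big)|\nabla\psi_\varepsilon|^2\,\d x$ can be bounded by $C\int|\nabla\psi_\varepsilon|^2\,\d x\to 0$, is false in general. As you started to notice but did not resolve, $R_\lambda(\Gamma)\subseteq\{x_1=2\lambda\}$ can sit inside $\overline{\Omega}_\lambda$, and the hypotheses only give $u\in C^1(\overline{\Omega}\setminus\Gamma)$, so $|\nabla u_\lambda|$ may blow up near $R_\lambda(\Gamma)$. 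Since $p\geq 2$ and $q>2$, the weight then blows up as well, and it does so exactly in the annulus $\mathcal{B}^\lambda_\varepsilon$ where $\nabla\psi_\varepsilon$ is concentrated, so it cannot be pulled out as a constant. Your parenthetical about $p<2$ actually has the direction of the difficulty reversed: for $p<2$ the factor $(|\nabla u|+|\nabla u_\lambda|)^{p-2}$ \emph{decays} as the gradients blow up, so the phenomenon you describe does not arise there; the genuine obstruction for $p<2$ is in the weighted Poincar\'e/Sobolev step later on (see Remark \ref{rem.Gammanonempty}), not in this lemma.

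The missing idea is the dichotomy the paper uses. Split $\Omega_\lambda$ (modulo the null set $R_\lambda(\Gamma)$) into $\Omega^{(1)}_\lambda=\{|\nabla u_\lambda|<2|\nabla u|\}$ and $\Omega^{(2)}_\lambda=\{|\nabla u_\lambda|\geq 2|\nabla u|\}$. On $\Omega^{(1)}_\lambda$ the weight is $\leq 3^{p-2}|\nabla u|^{p-2}$, and this \emph{is} genuinely bounded because $u\in C^1(\overline{\Omega}_\lambda)$ (note $\Gamma\cap\overline{\Omega}_\lambda=\emptyset$ since $\lambda<0$); here $p\geq 2$ is used both for the direction of this inequality and for the H\"older exponent $p/(p-2)$. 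On $\Omega^{(2)}_\lambda$ the weight can be large, but there $|\nabla w_\lambda^+|\geq\frac12|\nabla u_\lambda|$, and a Young inequality with \emph{unequal} conjugate exponents $p$ and $p/(p-1)$, combined with $p/(p-1)-2\leq 0$ (again $p\geq 2$), allows one to rewrite $|\nabla w_\lambda^+|^{p/(p-1)}$ as $|\nabla w_\lambda^+|^{p/(p-1)-2}|\nabla w_\lambda^+|^2$, replace the negative-power factor by the corresponding power of $|\nabla u_\lambda|$, and recombine exponents to obtain exactly $(|\nabla u|+|\nabla u_\lambda|)^{p-2}|\nabla w_\lambda^+|^2$ up to a constant; this is absorbed into the left-hand side, leaving only an error $\lesssim\int|\nabla\psi_\varepsilon|^p\to 0$. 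That same mechanism is why the paper takes $\psi_\varepsilon^{p+q}$ rather than $\psi_\varepsilon^2$: the unequal-exponent Young inequality needs enough spare powers of $\psi_\varepsilon$ in the absorbed term to remain dominated by $\psi_\varepsilon^{p+q}$, uniformly for $I_p$ and $I_q$. The rest of your outline (use of \eqref{eq:inequalities}, Lipschitz bound on $f$, Fatou's lemma to send $\varepsilon\to 0$) matches the paper.
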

 \begin{proof}
  For every fixed $\varepsilon > 0$, we consider the function
  $$\varphi_\varepsilon(x):= 
  \begin{cases}
   w_\lambda^+(x)\,\psi_\varepsilon^{p+q}(x) = (u-u_\lambda)^+(x)\,\psi_\varepsilon^{p+q}(x),
   & \text{if $x\in\Omega_\lambda$}, \\
   0, & \text{otherwise}.
   \end{cases}$$
  We claim that the following assertions hold:
  \begin{itemize}
   \item[(i)] $\varphi_\ep\in \mathrm{Lip}(\R^N)$;
   \item[(ii)]$\mathrm{supp}(\varphi_\ep)\subseteq\Omega_\lambda$ and
  $\varphi_\ep\equiv 0$ near $R_\lambda(\Gamma)$.
  \end{itemize}
  In fact, since $u\in C^1(\overline{\Omega}_\lambda)$ and $u_\lambda\in C^1(
  \overline{\Omega}_\lambda\setminus R_\lambda(\Omega))$,
  we have 
  $w_\lambda^+\in \mathrm{Lip}(\overline{\Omega}_\lambda\setminus V)$
  \emph{for every o\-pen set $V\supseteq R_\lambda(\Gamma)$};
  as a consequence,
  reminding that $\psi_\varepsilon\in\mathrm{Lip}(\R^N)$
  and $\psi_\varepsilon\equiv 0$ on a ne\-igh\-bor\-hood of $R_\lambda(\Gamma)$,
  we get
  $\varphi_\varepsilon\in \mathrm{Lip}(\overline{\Omega}_\lambda)$.
  On the other hand, since 
  $\varphi_\varepsilon\equiv 0$ on $\partial\Omega_\lambda$, we easily conclude that
  $\varphi_\ep\in \mathrm{Lip}(\R^N)$,
  as claimed. As for assertion (ii), it is a direct consequence
  of the very definition of $\varphi_\ep$ and of the fact that
  $$\text{$\psi_\ep\equiv 0$ on $\mathcal{V}^\lambda_\ep\supseteq R_\lambda(\Gamma)$}.$$
  On account of properties (i)-(ii) of $\varphi_\ep$,
   a standard density argument allows us to use
  $\varphi_\ep$ as a test function \emph{both in \eqref{debil1} and
  \eqref{eq.PDEulambda}}; reminding that $a$ is independent of $x_1$, this gives
	\begin{equation*}
	\begin{split} 
     &
     p\int_{\Omega_\lambda}
     \langle |\nabla u|^{p-2} \nabla u - |\nabla u_\lambda|^{p-2}\nabla u_\lambda,\nabla
     \varphi_\ep\rangle\,\d x
     \\
     & \qquad\qquad+ q
     \int_{\Omega_\lambda}a(x)\,\langle
     |\nabla u|^{q-2} \nabla u - |\nabla u_\lambda|^{q-2}\nabla u_\lambda,
		\nabla \varphi_\ep\rangle\,\d x \\
		&\quad = \int_{\Omega_\lambda} (f(u)-f(u_\lambda))\varphi_\ep\,\d x.
	\end{split}
  \end{equation*}
  By unraveling the very definition of $\varphi_\ep$, we then obtain
  \begin{equation} \label{eq.identityIntermediate}
	\begin{split} 
     &
     p\int_{\Omega_\lambda}
     \psi_\ep^{p+q}\cdot
     \langle |\nabla u|^{p-2} \nabla u - |\nabla u_\lambda|^{p-2}\nabla u_\lambda,\nabla
     w_\lambda^+\rangle\,\d x
     \\
     & \qquad\quad+ q
     \int_{\Omega_\lambda}\psi_\ep^{p+q}\cdot a(x)\,\langle
     |\nabla u|^{q-2} \nabla u - |\nabla u_\lambda|^{q-2}\nabla u_\lambda,
		\nabla w_\lambda^+\rangle\,\d x \\
    & \qquad\quad
    + p(p+q)
    \int_{\Omega_\lambda}
     w_\lambda^+\cdot\psi_\ep^{p+q-1}\cdot
     \langle |\nabla u|^{p-2} \nabla u - |\nabla u_\lambda|^{p-2}\nabla u_\lambda,\nabla
     \psi_\ep\rangle\,\d x \\
     & \qquad\quad
    + q(p+q)
    \int_{\Omega_\lambda}w_\lambda^+\cdot\psi_\ep^{p+q-1}\cdot a(x)\,\langle
     |\nabla u|^{q-2} \nabla u - |\nabla u_\lambda|^{q-2}\nabla u_\lambda,
		\nabla w_\lambda^+\rangle\,\d x \\
		& \quad = \int_{\Omega_\lambda} (f(u)-f(u_\lambda))\,w_\lambda^+\,\psi_\ep^{p+q}\,\d x.
	\end{split}
  \end{equation}
  We now observe that the integral in the
  left-hand side of \eqref{eq.identityIntermediate} is actually
  performed on the set
  $\mathcal{O}_\lambda := \{x\in\Omega_\lambda:\,u\geq u_\lambda\}\setminus R_\lambda(\Gamma)$;
  moreover, for every $x\in\mathcal{O}_\lambda$ we have
  $$0\leq u_\lambda(x)\leq u(x)\leq \|u\|_{L^\infty(\Omega_\lambda)}.$$
  As a consequence, since $f$ is locally Lipschitz-continuous on $\R$, we have
  \begin{equation} \label{eq.estimfLipschitz}
   \begin{split}
   & \int_{\Omega_\lambda} (f(u)-f(u_\lambda))\,w_\lambda^+\,\psi_\ep^{p+q}\,\d x
   = \int_{\Omega_\lambda} \frac{f(u)-f(u_\lambda)}{u-u_\lambda}\,(w_\lambda^+)^2\,\psi_\ep^{p+q}\,\d x
   \\
   & \qquad\qquad\leq L\int_{\Omega_\lambda}
   (w_\lambda^+)^2\,\psi_\ep^{p+q}\,\d x,
   \end{split}
  \end{equation}
  where $L = L(f,u,\lambda)> 0$ 
  is the Lipschitz constant of $f$ on 
  the interval $[0,\|u\|_{L^\infty(\Omega_\lambda)}]\subseteq\R$.
  Using \eqref{eq.estimfLipschitz} and the estimates in \eqref{eq:inequalities},
  from \eqref{eq.identityIntermediate} we then obtain
  \begin{equation} \label{eq.tostartfrom}
   \begin{split}
   & C_1\int_{\Omega_\lambda}\psi_\ep^{p+q}\,\big\{
   p(|\nabla u|+|\nabla u_\lambda|)^{p-2}
   + q a(x)\,(|\nabla u|+|\nabla u_\lambda|)^{q-2}\big\}
   \cdot |\nabla w_\lambda^+|^2\,\d x \\
   & \qquad 
   \leq p\int_{\Omega_\lambda}
     \psi_\ep^{p+q}\,
     \langle |\nabla u|^{p-2} \nabla u - |\nabla u_\lambda|^{p-2}\nabla u_\lambda,\nabla
     w_\lambda^+\rangle\,\d x
     \\
     & \qquad\quad+ q
     \int_{\Omega_\lambda}\psi_\ep^{p+q}\, a(x)\,\langle
     |\nabla u|^{q-2} \nabla u - |\nabla u_\lambda|^{q-2}\nabla u_\lambda,
		\nabla w_\lambda^+\rangle\,\d x \\
    & \qquad \leq p(p+q)
    \int_{\Omega_\lambda}
     w_\lambda^+\,\psi_\ep^{p+q-1}\cdot
     \big| |\nabla u|^{p-2} \nabla u - |\nabla u_\lambda|^{p-2}\nabla u_\lambda\big|\,
     |\nabla
     \psi_\ep|\,\d x \\
     & \qquad\quad
    + q(p+q)
    \int_{\Omega_\lambda}w_\lambda^+\,\psi_\ep^{p+q-1}\cdot a(x)\,\big|
     |\nabla u|^{q-2} \nabla u - |\nabla u_\lambda|^{q-2}\nabla u_\lambda
     \big|\,|\nabla w_\lambda^+|\,\d x \\
     & \qquad\quad 
     +L_f\,\int_{\Omega_\lambda}
   (w_\lambda^+)^2\,\psi_\ep^{p+q}\,\d x \\
   & \qquad \leq C_0\,\bigg(I_p+I_q+\int_{\Omega_\lambda}
   (w_\lambda^+)^2\,\psi_\ep^{p+q}\,\d x\bigg),
   \end{split}
  \end{equation}
  where $C_0 = C_0(p,q,\lambda,\|u\|_{L^\infty(\Omega)},f) > 0$
  is a suitable constant and
  \begin{equation} \label{eq.notationIpIq}
   \begin{split}
   & I_p := \int_{\Omega_\lambda}
   w_\lambda^+\,\psi_\ep^{p+q-1}\cdot
   (|\nabla u|+|\nabla u_\lambda|)^{p-2}|\nabla w_\lambda^+|\,
   |\nabla \psi_\ep|\,\d x, \\[0.1cm]
   &  I_q := \int_{\Omega_\lambda}
   w_\lambda^+\,\psi_\ep^{p+q-1}\cdot
   (|\nabla u|+|\nabla u_\lambda|)^{q-2}|\nabla w_\lambda^+|\,
   |\nabla \psi_\ep|\,\d x.
   \end{split}
  \end{equation}
  In order to complete the proof, we start from 
  \eqref{eq.tostartfrom} and we provide
  an e\-sti\-ma\-te of both $I_p$ and $I_q$. 
  Actually, we limit ourselves to consider $I_p$, since 
  $I_q$ can be treated analogously.
  \vspace*{0.08cm}
  
   To begin with, we split the set $\Omega_\lambda$ as 
  $\Omega_\lambda = \Omega^{(1)}_\lambda\cup\Omega^{(2)}_\lambda$, where
  \begin{align*}
  & \Omega^{(1)}_\lambda = \{x\in\Omega_\lambda\setminus R_\lambda(\Gamma):
   \,|\nabla u_\lambda(x)|
   < 2|\nabla u|\}\qquad\text{and} \\[0.08cm]
  & \qquad \Omega^{(2)}_\lambda = \{
   x\in\Omega_\lambda\setminus R_\lambda(\Gamma):\,|\nabla u_\lambda(x)|
   \geq 2|\nabla u|\};
   \end{align*}
  accordingly, 
  since Theorem \ref{thm.propCp}-(2) ensures that 
  $\mathcal{H}^N(R_\lambda(\Gamma)) = 0$,  we write 
  $$I_p = I_{p,1}+I_{p,2},\qquad\text{with
  $I_{p,i} = \int_{\Omega^{(i)}_\lambda}
   \{\cdots\}\,\d x \quad(i = 1,2)$}.$$
  We then proceed by estimating $I_{p,1},\,I_{p,2}$ separately.
  \medskip
  
  \textsc{Step I: Estimate of $I_{p,1}$}. By definition, for every
  $x\in \Omega_\lambda^{(1)}$ we have
  \begin{equation} \label{eq.estimInOmegafirst}
   |\nabla u_\lambda(x)|+|\nabla u(x)| < 3|\nabla u(x)|;
  \end{equation}
  Using the weighted
  Young inequality and \eqref{eq.estimInOmegafirst}, for every $\rho > 0$ we get
  \begin{align*}
   I_{p,1} & \leq
   \frac{\rho}{2}\int_{\Omega^{(1)}_\lambda}
   (|\nabla u|+|\nabla u_\lambda|)^{p-2}|\nabla w_\lambda^+|^2\,\psi_\ep^{p+q}\,\d x
   \\
   & \qquad +\frac{1}{2\rho}
   \int_{\Omega^{(1)}_\lambda}
   (|\nabla u|+|\nabla u_\lambda|)^{p-2}|\nabla \psi_\ep|^2\,
   (w_\lambda^+)^2\,\psi_\ep^{p+q-2}\,\d x \\
   & \leq 
   \frac{\rho}{2}\int_{\Omega^{(1)}_\lambda}
   \psi_\ep^{p+q}\,(|\nabla u|+|\nabla u_\lambda|)^{p-2}|\nabla w_\lambda^+|^2\,\d x
   \\
   & \qquad+\frac{3^{p-2}}{2\rho}
   \int_{\Omega^{(1)}_\lambda}
   |\nabla u|^{p-2}|\nabla \psi_\ep|^2\,
   (w_\lambda^+)^2\,\psi_\ep^{p+q-2}\,\d x = (\bigstar);
  \end{align*}
  from this, reminding that
  $0\leq\psi_\ep\leq 1$ and using H\"older's inequality, we have
  \allowdisplaybreaks 
  \begin{align*}
   (\bigstar) & \leq 
   \frac{\rho}{2}\int_{\Omega^{(1)}_\lambda}
   \psi_\ep^{p+q}\,(|\nabla u|+|\nabla u_\lambda|)^{p-2}|\nabla w_\lambda^+|^2\,\d x
   \\
   & \qquad + \frac{3^{p-2}}{2\rho}
   \int_{\Omega^{(1)}_\lambda}
   |\nabla u|^{p-2}|\nabla \psi_\ep|^2\,
   (w_\lambda^+)^2\,\psi_\ep^{p-2}\,\d x \\
   & \leq 
   \frac{\rho}{2}\int_{\Omega^{(1)}_\lambda}
   \psi_\ep^{p+q}\,(|\nabla u|+|\nabla u_\lambda|)^{p-2}|\nabla w_\lambda^+|^2\,\d x
   \\
   & \qquad
   +\frac{3^{p-2}}{2\rho}\bigg(\int_{\Omega^{(1)}_\lambda}
   |\nabla u|^p\psi_\ep^p\,\d x\bigg)^{\frac{p-2}{p}}\,\bigg(
   \int_{\Omega^{(1)}_\lambda}|\nabla \psi_\ep|^p\,(w_\lambda^+)^p\bigg)^{\frac{2}{p}}
   \\[0.1cm]
   & (\text{since $0\leq \psi_\ep\leq 1$ and 
   $0\leq w_\lambda^+\leq u\leq \|u\|_{L^\infty(\Omega_\lambda)}$}) 
   \\
   & \leq 
   \frac{\rho}{2}\int_{\Omega^{(1)}_\lambda}
   \psi_\ep^{p+q}\,(|\nabla u|+|\nabla u_\lambda|)^{p-2}|\nabla w_\lambda^+|^2\,\d x
   \\
   & \qquad 
   + \frac{\mathbf{c}}{\rho}
   \bigg(\int_{\Omega^{(1)}_\lambda}
   |\nabla u|^p\,\d x\bigg)^{\frac{p-2}{p}}\,\bigg(
   \int_{\Omega^{(1)}_\lambda}|\nabla \psi_\ep|^p\bigg)^{\frac{2}{p}}
   \\[0.1cm]
   & (\text{reminding that $a\geq 0$ and $p,q\geq 2$}) \\
   & \leq 
   \frac{\rho}{2}\int_{\Omega_\lambda}\psi_\ep^{p+q}\,\big\{
   p(|\nabla u|+|\nabla u_\lambda|)^{p-2}
   + q a(x)\,(|\nabla u|+|\nabla u_\lambda|)^{q-2}\big\}
   \cdot |\nabla w_\lambda^+|^2\,\d x \\
   & \qquad
   + \frac{\mathbf{c}}{\rho}
   \bigg(\int_{\Omega_\lambda}
   |\nabla u|^p\,\d x\bigg)^{\frac{p-2}{p}}\,\bigg(
   \int_{\Omega_\lambda}|\nabla \psi_\ep|^p\bigg)^{\frac{2}{p}},
  \end{align*}
  \vspace*{0.05cm}
  where $\mathbf{c} = \mathbf{c}(p,\lambda,\|u\|_{L^\infty(\Omega_\lambda)}) > 0$. 
  Summing up, we have obtained the estimate
  \begin{equation} \label{eq.estimI1ptouse}
   \begin{split}
   I_{p,1} & \leq \frac{\rho}{2}\int_{\Omega_\lambda}\psi_\ep^{p+q}\,\big\{
   p(|\nabla u|+|\nabla u_\lambda|)^{p-2}
   + q a(x)\,(|\nabla u|+|\nabla u_\lambda|)^{q-2}\big\}
   |\nabla w_\lambda^+|^2\,\d x \\
   & \qquad
   + \frac{\mathbf{c}}{\rho}
   \bigg(\int_{\Omega_\lambda}
   |\nabla u|^p\,\d x\bigg)^{\frac{p-2}{p}}\,\bigg(
   \int_{\Omega_\lambda}|\nabla \psi_\ep|^p\bigg)^{\frac{2}{p}},
   \end{split}
  \end{equation}
  holding true for every choice of $\rho > 0$.
  \medskip
  
  \textsc{Step II: Estimate of $I_{p,2}$}.
  By definition, for every $x\in\Omega_\lambda^{(2)}$ we have
  \begin{equation} \label{eq.estimInOmegasec}
  \frac{1}{2}|\nabla u_\lambda| \leq 
   |\nabla u_\lambda|-|\nabla u|
   \leq |\nabla w_\lambda| \leq |\nabla u_\lambda|+|\nabla u|
   \leq \frac{3}{2}|\nabla u_\lambda|.
  \end{equation}
  Using again the weighted
  Young i\-ne\-qua\-li\-ty and \eqref{eq.estimInOmegasec}, for every $\rho > 0$ we get
  \begin{align*}
   I_{p,2} & \leq
   \Big(1-\frac{1}{p}\Big)\rho^{\frac{p}{p-1}}\int_{\Omega^{(2)}_\lambda}
   (|\nabla u|+|\nabla u_\lambda|)^{\frac{p(p-2)}{p-1}}|\nabla w_\lambda^+|^{\frac{p}{p-1}}
   \,\psi_\ep^{\frac{p(p+q-1)}{p-1}}\,\d x
   \\
   & \qquad +\frac{1}{p\rho^p}
   \int_{\Omega^{(2)}_\lambda}
   |\nabla \psi_\ep|^p\,
   (w_\lambda^+)^p\,\d x \\
   & = \Big(1-\frac{1}{p}\Big)
   \rho^{\frac{p}{p-1}}\int_{\Omega^{(2)}_\lambda}
   (|\nabla u|+|\nabla u_\lambda|)^{\frac{p(p-2)}{p-1}}|\nabla w_\lambda^+|^{\frac{p}{p-1}-2}\,
   |\nabla w_\lambda^+|^2
   \,\psi_\ep^{p+q+\frac{q}{p-1}}\,\d x
   \\
   & \qquad+\frac{1}{p\rho^p}
   \int_{\Omega^{(2)}_\lambda}
   |\nabla \psi_\ep|^p\,
   (w_\lambda^+)^p\,\d x \\
   & (\text{remind that $p\geq 2$, so that $p/(p-1)-2\leq 0$}) \\
   & 
   \leq c_p\,
   \rho^{\frac{p}{p-1}}\int_{\Omega^{(2)}_\lambda}
   |\nabla u_\lambda|^{\frac{p(p-2)}{p-1}}|\nabla u_\lambda|^{\frac{p}{p-1}-2}\,
   |\nabla w_\lambda^+|^2
   \,\psi_\ep^{p+q+\frac{q}{p-1}}\,\d x
   \\
   & \qquad+\frac{1}{p\rho^p}
   \int_{\Omega^{(2)}_\lambda}
   |\nabla \psi_\ep|^p\,
   (w_\lambda^+)^p\,\d x = (\bullet),
  \end{align*}
  where we have used the notation
  $$c_p := \Big(1-\frac{1}{p}\Big)
   \Big(\frac{3}{2}\Big)^{\frac{p(p-2)}{p-1}}
   \Big(\frac{1}{2}\Big)^{\frac{p}{p-1}-2};$$
  from this, reminding that $0\leq\psi_\ep\leq 1$ and
  $0\leq w_\lambda^+\leq \|u\|_{L^\infty(\Omega_\lambda)}$, we have
  \begin{align*}
   (\bullet) & = c_p\,\rho^{\frac{p}{p-1}}\int_{\Omega^{(2)}_\lambda}
   |\nabla u_\lambda|^{p-2}\,
   |\nabla w_\lambda^+|^2
   \,\psi_\ep^{p+q}\cdot\psi_\ep^{\frac{q}{p-1}}\,\d x
   \\
   & \qquad+\frac{1}{p\rho^p}
   \int_{\Omega^{(2)}_\lambda}
   |\nabla \psi_\ep|^p\,
   (w_\lambda^+)^p\,\d x \\
   & \leq \mathbf{c}'\,\rho^{\frac{p}{p-1}}
   \int_{\Omega^{(2)}_\lambda}
   |\nabla u_\lambda|^{p-2}\,
   |\nabla w_\lambda^+|^2
   \,\psi_\ep^{p+q}\,\d x
   +\frac{\mathbf{c}'}{\rho^p}
   \int_{\Omega^{(2)}_\lambda}
   |\nabla \psi_\ep|^p\,\d x \\
   & (\text{since $p\geq 2$ and the function $a$ is non-negative}) \\
   & \leq \mathbf{c}'\,
   \rho^{\frac{p}{p-1}}
   \int_{\Omega_\lambda}\psi_\ep^{p+q}\,\big\{
   p(|\nabla u|+|\nabla u_\lambda|)^{p-2}
   + q a(x)\,(|\nabla u|+|\nabla u_\lambda|)^{q-2}\big\}
   \cdot |\nabla w_\lambda^+|^2\,\d x
   \\
   & \qquad+\frac{\mathbf{c}'}{\rho^p}
   \int_{\Omega_\lambda}
   |\nabla \psi_\ep|^p\,\d x,
  \end{align*}
  \vspace*{0.05cm}
  where $\mathbf{c}' = \mathbf{c}'(p,\lambda,\|u\|_{L^\infty(\Omega_\lambda)}) > 0$.
  Summing up, we have obtained the estimate
  \begin{equation} \label{eq.estimI2ptouse}
   \begin{split}
   I_{p,2} & \leq 
   \mathbf{c}'\,
   \rho^{\frac{p}{p-1}}
   \int_{\Omega_\lambda}\psi_\ep^{p+q}\,\big\{
   p(|\nabla u|+|\nabla u_\lambda|)^{p-2}
   + q a(x)\,(|\nabla u|+|\nabla u_\lambda|)^{q-2}\big\}
   |\nabla w_\lambda^+|^2\,\d x
   \\
   & \qquad+\frac{\mathbf{c}'}{\rho^p}
   \int_{\Omega_\lambda}
   |\nabla \psi_\ep|^p\,\d x,
   \end{split}
  \end{equation}
  Gathering together \eqref{eq.estimI1ptouse} and
  \eqref{eq.estimI2ptouse}, we finally derive
  \begin{equation} \label{eq.estimFinalIp}
   \begin{split}
    I_p & \leq \Big(\frac{\rho}{2}+\kappa\rho^{\frac{p}{p-1}}\Big)\times \\
    & \qquad \times\int_{\Omega_\lambda}\psi_\ep^{p+q}\,\big\{
   p(|\nabla u|+|\nabla u_\lambda|)^{p-2}
   + q a(x)\,(|\nabla u|+|\nabla u_\lambda|)^{q-2}\big\}
   |\nabla w_\lambda^+|^2\,\d x
   \\ 
   & + \frac{\kappa}{\rho}
   \bigg(\int_{\Omega_\lambda}
   |\nabla u|^p\,\d x\bigg)^{\frac{p-2}{p}}\,\bigg(
   \int_{\Omega_\lambda}|\nabla \psi_\ep|^p\bigg)^{\frac{2}{p}}
   +\frac{\kappa}{\rho^p}
   \int_{\Omega_\lambda}
   |\nabla \psi_\ep|^p\,\d x,
   \end{split}
  \end{equation}
  for a suitable constant $\kappa$ only depending
  on $p,\lambda$ and $\|u\|_{L^\infty(\Omega_\lambda)}$.
  Furthermore, by arguing exactly in the same way, we obtain the following
  analogous
 estimate for $I_q$,
 \begin{equation} \label{eq.estimFinalIq}
   \begin{split}
    I_q & \leq \Big(\frac{\rho}{2}+\kappa'\rho^{\frac{q}{q-1}}\Big)\times \\
    & \qquad \times\int_{\Omega_\lambda}\psi_\ep^{p+q}\,\big\{
   p(|\nabla u|+|\nabla u_\lambda|)^{p-2}
   + q a(x)\,(|\nabla u|+|\nabla u_\lambda|)^{q-2}\big\}
   |\nabla w_\lambda^+|^2\,\d x
   \\ 
   & + \frac{\kappa'}{\rho}
   \bigg(\int_{\Omega_\lambda}
   |\nabla u|^q\,\d x\bigg)^{\frac{q-2}{q}}\,\bigg(
   \int_{\Omega_\lambda}|\nabla \psi_\ep|^q\bigg)^{\frac{2}{q}}
   +\frac{\kappa'}{\rho^q}
   \int_{\Omega_\lambda}
   |\nabla \psi_\ep|^q\,\d x,
   \end{split}
  \end{equation}
  where $\kappa'$ is another constant only depending on 
  $q,\lambda$ and $\|u\|_{L^\infty(\Omega_\lambda)}$.
  \medskip
  
  With the above estimates for $I_p$ and $I_q$ at hand, we are finally ready
  to conclude the proof: in fact, by combining 
   \eqref{eq.tostartfrom}, 
   \eqref{eq.estimFinalIp} and \eqref{eq.estimFinalIq}, we get
   \begin{equation} \label{eq.finalFatou}
   \begin{split}
   & \Big(C_1-C_0\rho-C_0\kappa\,\rho^{\frac{p}{p-1}}-C_0\kappa'\rho^{\frac{q}{q-1}}\Big)
    \times \\
    &\quad \times \int_{\Omega_\lambda}\psi_\ep^{p+q}\,\big\{
   p(|\nabla u|+|\nabla u_\lambda|)^{p-2}
   + q a(x)\,(|\nabla u|+|\nabla u_\lambda|)^{q-2}\big\}
   |\nabla w_\lambda^+|^2\,\d x
   \\
   & \qquad\quad
   \leq 
   C_0\,\bigg\{
   \frac{\kappa}{\rho}
   \bigg(\int_{\Omega_\lambda}
   |\nabla u|^p\,\d x\bigg)^{\frac{p-2}{p}}\,\bigg(
   \int_{\Omega_\lambda}|\nabla \psi_\ep|^p\bigg)^{\frac{2}{p}}
   +\frac{\kappa}{\rho^p}
   \int_{\Omega_\lambda}
   |\nabla \psi_\ep|^p\,\d x
   \\[0.1cm]
   & \qquad\qquad\qquad\quad
   + \frac{\kappa'}{\rho}
   \bigg(\int_{\Omega_\lambda}
   |\nabla u|^q\,\d x\bigg)^{\frac{q-2}{q}}\,\bigg(
   \int_{\Omega_\lambda}|\nabla \psi_\ep|^q\bigg)^{\frac{2}{q}}
   +\frac{\kappa}{\rho^q}
   \int_{\Omega_\lambda}
   |\nabla \psi_\ep|^q\,\d x \\[0.1cm]
   &\qquad\qquad\qquad\qquad\qquad\quad
   + \int_{\Omega_\lambda}
   (w_\lambda^+)^2\,\psi_\ep^{p+q}\,\d x\bigg\}
   \\
   & \qquad\quad (\text{since $0\leq\psi_\ep\leq 1$ and
  $0\leq w_\lambda^+\leq \|u\|_{L^\infty(\Omega_\lambda)}$}) \\
  & \qquad\quad
   \leq 
   C_0\,\bigg\{
   \frac{\kappa}{\rho}
   \bigg(\int_{\Omega_\lambda}
   |\nabla u|^p\,\d x\bigg)^{\frac{p-2}{p}}\,\bigg(
   \int_{\Omega_\lambda}|\nabla \psi_\ep|^p\bigg)^{\frac{2}{p}}
   +\frac{\kappa}{\rho^p}
   \int_{\Omega_\lambda}
   |\nabla \psi_\ep|^p\,\d x
   \\[0.1cm]
   & \qquad\qquad\qquad\quad
   + \frac{\kappa'}{\rho}
   \bigg(\int_{\Omega_\lambda}
   |\nabla u|^q\,\d x\bigg)^{\frac{q-2}{q}}\,\bigg(
   \int_{\Omega_\lambda}|\nabla \psi_\ep|^q\bigg)^{\frac{2}{q}}
   +\frac{\kappa}{\rho^q}
   \int_{\Omega_\lambda}
   |\nabla \psi_\ep|^q\,\d x \\[0.1cm]
   &\qquad\qquad\qquad\qquad\qquad\quad
   + \|u\|^2_{L^\infty(\Omega_\lambda)}\cdot\mathcal{H}^N(\Omega_\lambda)\bigg\};
   \end{split}
  \end{equation}
  from this, by choosing $\rho > 0$ in such a way that
  $$C_1-C_0\rho-C_0\kappa\,\rho^{\frac{p}{p-1}}-C_0\kappa'\rho^{\frac{q}{q-1}} < \frac{1}{2},$$
  and by letting $\varepsilon\to 0$ with the aid of Fatou's lemma
  (remind the properties (1)-to-(4) of the function
  $\psi_\ep$ and that
  $u\in C^1(\overline{\Omega}_\lambda)$ if $\lambda < 0$), we obtain
  $$\int_{\Omega_\lambda} \big[ p(|\nabla u|+|\nabla u_\lambda|)^{p-2}
   + q a(x)\,(|\nabla u|+|\nabla u_\lambda|)^{q-2}\big]
   |\nabla w_\lambda^+|^2\,\d x
   \leq \mathbf{c}_0,$$
   where $\mathbf{c}_0 = 
   2\,C_0\,\|u\|^2_{L^\infty(\Omega_\lambda)}\cdot\mathcal{H}^N(\Omega_\lambda)$.
   This is ends the proof.
 \end{proof}
 Another key tool for the proof of Theorem \ref{thm:symmetry} is Lemma
 \ref{conncomp} below. Before stating this result, we first introduce
 a notation: for every fixed
 $\lambda\in(\mathbf{a},0)$, we define 
 \begin{equation} \label{eq.defZlambda}
  \mathcal{Z}_\lambda := \big\{x\in\Omega_\lambda\setminus R_\lambda(\Gamma):\,
  \nabla u(x) = \nabla u_\lambda(x) = 0\big\}.
 \end{equation}
 We also notice that, 
 since $u,u_\lambda\in C^1(\overline{\Omega}_\lambda\setminus R_\lambda(\Gamma))$,
 the set $\mathcal{Z}_\lambda$ is closed (in $\Omega_\lambda$).
 \begin{lem}\label{conncomp}
  Let $\lambda\in (\mathbf{a},0)$ and let $\mathcal{C}_\lambda \subseteq \Omega_\lambda \setminus
	(R_\lambda(\Gamma) \cup \mathcal{Z}_\lambda)$ be a
	\emph{connected component} of \emph{(}the open set\emph{)}
	$\Omega_\lambda \setminus
	(R_\lambda(\Gamma) \cup \mathcal{Z}_\lambda)$. 	
	If 
	$u \equiv u_\lambda$ in $\mathcal{C}_\lambda$, then 
	$$\mathcal{C}_\lambda = \emptyset.$$
 \end{lem}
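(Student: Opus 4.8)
The plan is to argue by contradiction, assuming $\mathcal{C}_\lambda\neq\emptyset$. I would first record the consequences of the hypothesis on $\mathcal{C}_\lambda$: since $u\equiv u_\lambda$ there, also $\nabla u\equiv\nabla u_\lambda$; as $\mathcal{C}_\lambda$ is disjoint from $\mathcal{Z}_\lambda$ this forces $\nabla u=\nabla u_\lambda\neq 0$ on $\mathcal{C}_\lambda$; since $\lambda<0$ we have $\mathcal{C}_\lambda\subseteq\Omega_\lambda\subseteq\Omega\setminus\Gamma$, hence $u>0$ on $\mathcal{C}_\lambda$; finally $\mathcal{C}_\lambda$ avoids $R_\lambda(\Gamma)$, and $u$ and $u_\lambda=u\circ R_\lambda$ are both $C^1$ on a neighbourhood of any point of $\overline{\mathcal{C}_\lambda}\setminus R_\lambda(\Gamma)$ (recall $\overline{\mathcal{C}_\lambda}\subseteq\overline{\Omega_\lambda}\subseteq\overline{\Omega}\cap R_\lambda(\overline{\Omega})$, which is compact). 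Because $\mathcal{C}_\lambda$ is a connected component of the \emph{open} set $\Omega_\lambda\setminus(R_\lambda(\Gamma)\cup\mathcal{Z}_\lambda)$, it is relatively closed and open in it, so $\partial\mathcal{C}_\lambda\subseteq\partial\Omega_\lambda\cup R_\lambda(\Gamma)\cup\mathcal{Z}_\lambda$; with $\partial\mathcal{C}_\lambda\subseteq\overline{\Omega_\lambda}$ and $\partial\Omega_\lambda=(\partial\Omega\cap\{x_1\le\lambda\})\cup(\overline{\Omega}\cap\Pi_\lambda)$ this locates $\partial\mathcal{C}_\lambda$ inside
\[
(\partial\Omega\cap\{x_1\le\lambda\})\ \cup\ (\overline{\Omega}\cap\Pi_\lambda)\ \cup\ R_\lambda(\Gamma)\ \cup\ \mathcal{Z}_\lambda .
\]
Two structural remarks I would use: (a) $R_\lambda(\Gamma)$ has zero $q$-capacity (reflections are affine isometries), hence by Theorem \ref{thm.propCp} zero $(N-1)$-dimensional Hausdorff measure, so it is locally non-disconnecting and removable; (b) $\mathcal{Z}_\lambda$ has empty interior, since on any open set where $\nabla u\equiv 0$ the equation gives $f(u)\equiv 0$, forcing $u\equiv 0$, impossible because $u>0$ in $\Omega_\lambda$.

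The heart of the argument is a maximum-principle dichotomy. Let $M:=\max_{\overline{\mathcal{C}_\lambda}}u$; since $u>0$ on the nonempty open set $\mathcal{C}_\lambda$ and $\nabla u\neq 0$ everywhere on it, $M>0$, $u<M$ throughout $\mathcal{C}_\lambda$, and $M$ is attained only at some $\bar x\in\partial\mathcal{C}_\lambda$. If $\bar x\in\partial\Omega\cap\{x_1\le\lambda\}$ (so $\bar x\notin\Gamma$, because $\bar x_1<0$), then $u(\bar x)=0=M$, a contradiction. If instead $\bar x$ is an interior point of $\Omega$ — i.e.\ $\bar x\in(\Omega\cap\Pi_\lambda)\cup R_\lambda(\Gamma)$ — with $\nabla u(\bar x)\neq 0$, then by continuity $\nabla u\neq 0$ on a ball $B$ centred at $\bar x$, so $B$ meets neither $\mathcal{Z}_\lambda\subseteq\{\nabla u=0\}$ nor (shrinking $B$) $\partial\Omega$; by remark (a) the connected set $B\cap\{x_1<\lambda\}\setminus R_\lambda(\Gamma)$ meets $\mathcal{C}_\lambda$ and hence lies in it, so $\overline{\mathcal{C}_\lambda}\supseteq\overline{B}\cap\{x_1\le\lambda\}$, and — using that $u\equiv u_\lambda$ on $\mathcal{C}_\lambda$ makes $u$ symmetric about $\Pi_\lambda$ on $\mathcal{C}_\lambda\cup R_\lambda(\mathcal{C}_\lambda)$ — we get $u\le M$ on the \emph{full} ball $B$ (when $\bar x\in\Pi_\lambda$; when $\bar x_1<\lambda$, $B$ already lies on one side and this is immediate). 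Thus $\bar x$ is an interior local maximum of $u$, forcing $\nabla u(\bar x)=0$ — a contradiction. Consequently, every maximiser of $u$ on $\overline{\mathcal{C}_\lambda}$ is an interior point of $\Omega$ at which $\nabla u$ vanishes, i.e.\ a point of $\mathcal{Z}_\lambda$ (or of $R_\lambda(\Gamma)$ with vanishing gradient).

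Ruling out this last possibility is the main obstacle, since there the linearised operator degenerates and no maximum principle is at hand. To attack it I would first pass to the connected component $\mathcal{C}_\lambda'$ of $\Omega_\lambda\setminus\mathcal{Z}_\lambda$ containing $\mathcal{C}_\lambda$: by remark (a), $\mathcal{C}_\lambda=\mathcal{C}_\lambda'\setminus R_\lambda(\Gamma)$ and, by density and continuity, $u\equiv u_\lambda$ and $0\neq\nabla u\equiv\nabla u_\lambda$ extend to all of $\mathcal{C}_\lambda'$, so $R_\lambda(\Gamma)$ drops out and one is reduced to showing that a component $\mathcal{C}_\lambda'$ of $\Omega_\lambda\setminus\mathcal{Z}_\lambda$ carrying $u\equiv u_\lambda$ is empty. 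This is exactly where the Appendix is needed: using the strong comparison principle and the Hopf-type lemma, whose applicability rests on the summability estimate of Lemma \ref{leaiuto} (which keeps the degenerate weight $(|\nabla u|+|\nabla u_\lambda|)^{p-2}$ under control near $\mathcal{Z}_\lambda$), one propagates the identity $w_\lambda\equiv 0$ across $\mathcal{Z}_\lambda$ until it is forced up to a point $\bar x\in\overline{\mathcal{C}_\lambda'}\cap\partial\Omega$ with $\bar x_1<\lambda$; there $0=u(\bar x)=u_\lambda(\bar x)=u(R_\lambda\bar x)$, while the convexity and $x_1$-symmetry of $\Omega$ put $R_\lambda(\bar x)$ on the open segment joining $\bar x$ to its mirror image in $\{x_1=0\}$, hence — for an appropriate choice of $\bar x$ dictated by the geometry of $\Omega$ — inside $\Omega\setminus\Gamma$, where $u>0$: the desired contradiction. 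Carrying out this propagation past the degenerate set $\mathcal{Z}_\lambda$, and controlling the way $\mathcal{C}_\lambda'$ abuts it, is the technically heaviest step.
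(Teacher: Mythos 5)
Your approach is genuinely different from the paper's, and it contains a gap at precisely the step you flag as the ``technically heaviest.'' You reduce the problem to ruling out a maximiser $\bar x$ of $u$ on $\overline{\mathcal{C}_\lambda}$ lying in $\mathcal{Z}_\lambda$, and you then propose to ``propagate the identity $w_\lambda\equiv 0$ across $\mathcal{Z}_\lambda$'' by means of the strong comparison principle (Theorem \ref{thm.SMP}) and the Hopf-type lemma (Theorem \ref{thm.Hopftype}). But those tools, as stated in the Appendix, require that at least one of $|\nabla u|$, $|\nabla u_\lambda|$ be non-vanishing on the open set where they are applied, and on $\mathcal{Z}_\lambda$ \emph{both} gradients vanish by definition. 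The SCP therefore cannot be invoked across $\mathcal{Z}_\lambda$, and there is no substitute comparison mechanism at a degenerate critical point of a $p$-Laplacian-type operator. In other words, the step you correctly identify as the main obstacle is exactly the one your tools cannot handle, and it is not carried out. Similarly, the maximum-principle dichotomy in the first part relies on the operator satisfying an interior maximum principle near a point with $\nabla u\neq 0$; this part is plausible but not the crux, and it does not reduce the difficulty.

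The paper avoids the degeneracy entirely by an \emph{integral} argument rather than a pointwise one. Writing $\mathcal{C}:=\mathcal{C}_\lambda\cup R_\lambda(\mathcal{C}_\lambda)$, the authors observe (using $u\equiv u_\lambda$ on $\mathcal{C}_\lambda$, $u=0$ on $\partial\Omega$, and $u>0$ in the interior) that $\nabla u=\nabla u_\lambda=0$ on $\partial\mathcal{C}\setminus\Gamma_0$, so the function
\[
\varphi_\ep(x)=h_\ep(|\nabla u(x)|)\,\gamma_\ep^2(x)\,\mathbf{1}_{\mathcal{C}}(x),
\]
with $h_\ep$ a cut-off vanishing for $|\nabla u|\le\ep$ and $\gamma_\ep$ a capacity cut-off near $\Gamma_0$, is Lipschitz and compactly supported in $\mathcal{C}\setminus\Gamma_0$. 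Plugging it into the weak formulation and using the $W^{2,s}$ estimates of Riey (in particular $|\nabla u|^{p-2}\|D^2u\|\in L^1$), a dominated-convergence passage $\ep\to 0$ yields $\int_{\mathcal{C}}f(u)\,\d x=0$, which contradicts $f(u)>0$ on the nonempty open set $\mathcal{C}$. This route never needs a comparison or Hopf principle to cross $\mathcal{Z}_\lambda$. Your decomposition into ``interior-nondegenerate maximiser'' versus ``degenerate maximiser'' is a reasonable first instinct, but without an additional idea for the degenerate case it does not close; the paper's test-function identity is exactly that additional idea, and it replaces the entire propagation scheme.
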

 \begin{proof}
  We first notice that, since it is a connected component, the set
 $\mathcal{C}_\lambda$ is surely open. In order to prove the lemma, we then
 consider the $\Pi_\lambda$-symmetric set
 $$\mathcal{C} := \mathcal{C}_\lambda\cup R_\lambda(\mathcal{C}_\lambda),$$
 and we show that $\mathcal{C} = \emptyset$. To this end, we argue by contradiction
 and we assume that
 $$\mathcal{C} \neq \emptyset.$$
 Since $\mathcal{C}_\lambda$ is open and $R_\lambda$ is a bijective linear map,
 also the set $\mathcal{C}$ is open; as a consequence, since
 $u > 0$ on $\Omega\setminus\Gamma$ and $f$
 is (continuous and) positive on $(0,\infty)$, we have
 \begin{equation} \label{eq.IntegralPositivo}
  \int_{\mathcal{C}}f(u)\,\d x > 0.
 \end{equation}
 On the other hand, since $u$ is a solution of \eqref{eq:Riey2}, one has
 \begin{equation} \label{eq.wherechoosetest}
  \begin{split}
  0 \leq \int_{\Omega}f(u)\varphi\,\d x =
  \int_\Omega \big(p|\nabla u|^{p-2}+qa(x)|\nabla u|^{q-2}\big)
	\langle \nabla u, \nabla
	\varphi\rangle\,\d x
  \end{split}
 \end{equation}
 for every function $\varphi\in C^1_c(\Omega\setminus\Gamma)$ such that
 $\varphi\geq 0$ on $\Omega\setminus\Gamma$.
 We now aim at choosing an \emph{ad-hoc} test function in 
 \eqref{eq.wherechoosetest} allowing us to contradict \eqref{eq.IntegralPositivo}.
 \vspace*{0.05cm}
 
 To begin with, since $\Gamma_0 := \Gamma\cup R_\lambda(\Gamma)$ has vanishing
 $q$-capacity (as the same is true of both $\Gamma$ and $R_\lambda(\Gamma)$),
 we can imitate the construction of the family $\{\psi_\ep\}$ in \eqref{test1}: 
 this leads
 to another family of functions, say $\{\gamma_\ep\}$, satisfying the following properties:
 \begin{enumerate}
  \item $\gamma_\ep$ is Lipschitz-continuous in $\R^N$, so that
  $\gamma_\varepsilon\in W^{1,\infty}(\R^N)$;
  
  \item $0\leq \gamma_\varepsilon\leq 1$ on $\R^N$;
  
  \item $\gamma_\ep\equiv 1$ on $\R^N\setminus \mathcal{O}^\lambda_\ep$
  and $\gamma_\ep \equiv 0$ on $\mathcal{W}^\lambda_\ep$, where
  $$\mathcal{O}^\lambda_\ep := \big\{x\in\R^N:\,
  \mathrm{dist}(x,\Gamma_0) < \ep\big\},$$ 
  and $\mathcal{W}^\lambda_\ep\subseteq \mathcal{O}^\lambda_\ep$ is a suitable
  neighborhood of $\Gamma_0$;
  
  \item there exists a constant $C > 0$, independent of $\varepsilon$, such that
  \begin{equation} \label{eq.estimNablagamma}
   \int_{\R^N}|\nabla
  \gamma_{\varepsilon}|^q\,\d x \leq C \varepsilon.
  \end{equation}
  \end{enumerate} 
  Moreover, for every fixed $\ep > 0$ we consider the maps
  $G_\ep, h_\ep:[0,\infty)\to\R$ defined as
  $$G_\ep(t) := \begin{cases}
  0, & \text{if $0\leq t \leq \ep$}, \\
  2t-{2\ep}, & \text{if $\ep < t \leq 2\ep$}, \\
  t, &\text{if $t > 2\ep$}.
  \end{cases}\qquad\text{and}\qquad h_\ep(t) := \frac{G_\ep(t)}{t}.$$
  Using the family $\{\gamma_\ep\}$ and the function $h_\ep$ just introduced,
  we set
  $$\varphi_\ep:\R^N\to\R, \qquad \varphi_\ep(x) := \begin{cases}
  h_\ep(|\nabla u(x)|)\cdot\gamma^2_\ep(x), & \text{if $x\in \mathcal{C}$}, \\
  0, &\text{if $x\notin\mathcal{C}$},
  \end{cases}$$
  and we claim that $\varphi_\ep$ can be chosen as a test function in 
  \eqref{eq.wherechoosetest}.
  In fact, since both $\gamma_\ep$ and $h_\ep$ are Lipschitz-continuous on the whole of
   $\R^N$, we clearly have 
   $\varphi_\ep\in\mathrm{Lip}(\mathcal{C})$;
   mo\-re\-o\-ver, since $u\equiv u_\lambda$ in $\mathcal{C}_\lambda$ and $u\equiv 0$
   on $\partial\Omega$, one has
   \begin{equation} \label{eq.uequivzeroinside}
    \text{$u\equiv 0$ on $K := R_\lambda(\partial\mathcal{C}_\lambda\cap\partial\Omega)$}.
   \end{equation}
   Now, reminding that $u > 0$ in $\Omega\setminus\Gamma$ 
   and $u\in C^1(\overline{\Omega}\setminus\Gamma)$, 
   from 
   \eqref{eq.uequivzeroinside} we infer that
   $$\text{$\nabla u = \nabla u_\lambda = 0$ on $\partial\mathcal{C}\setminus \Gamma_0$};$$
   as a consequence, taking into account the very definition of $h_\ep$, 
   it is not difficult to recognize that
   $\varphi_\ep\in\mathrm{Lip}(\R^N)$ and that
   $\mathrm{supp}(\varphi_\ep)\subseteq\mathcal{C}
   \setminus\Gamma_0$.
   By a standard density argument we are then entitled to use
   $\varphi_\ep$ as a test function in \eqref{eq.wherechoosetest}, obtaining 
   \begin{equation} \nonumber
	\begin{split}
	0 \leq & \int_{\mathcal{C}} f(u)h_\varepsilon(|\nabla u|)\gamma^2_\varepsilon\,\d x  
	= \int_{\mathcal{C}}\gamma_\varepsilon^2\cdot
	\big(p|\nabla u|^{p-2}+a(x)|\nabla u|^{q-2}\big)\,\langle 
	\nabla u,
	\nabla(h_\ep\circ |\nabla u|)\rangle\,\d x\\
	& \qquad
	+ 2\int_{\mathcal{C}} 
	h_{\varepsilon}(|\nabla u|)\,\gamma_\varepsilon\cdot\big(
	p|\nabla u|^{p-2}+qa(x)|\nabla u|^{q-2} \big)\,\langle \nabla u, \nabla \gamma_\varepsilon
	\rangle\,\d x.
	\end{split}
	\end{equation}
	To proceed further we observe that,
	since $u\in C^1(\overline{\Omega}_\lambda)$
	and $u\equiv u_\lambda$ on $\mathcal{C}_\lambda\subseteq\Omega_\lambda$, we have
	$u\in C^1(\overline{\mathcal{C}})$. We can then invoke the regularity
	results proved by Riey \cite{riey}, ensuring that
	\begin{equation} \label{eq.regulriey}
	 u\in W^{2,s}(\mathcal{C})\quad \text{for a suitable $s = s_p\in (1,2]$}.
	\end{equation}
	As a consequence, we can write
	\begin{equation} \label{eq.IntegraliwithDsec}
	 \begin{split}
	  & \int_{\mathcal{C}}\gamma_\varepsilon^2\cdot
	\big(p|\nabla u|^{p-2}+a(x)|\nabla u|^{q-2}\big)\,\langle 
	\nabla u,
	\nabla(h_\ep\circ |\nabla u|)\rangle\,\d x\\
	& \qquad = \int_{\mathcal{C}}h_{\varepsilon}'(|\nabla u|)\,\gamma_\varepsilon^2\cdot
	\big(p|\nabla u|^{p-2}+a(x)|\nabla u|^{q-2}\big)\,\langle 
	\nabla u,
	\nabla |\nabla u|\rangle\,\d x.
	 \end{split}
	\end{equation}
	From this, since
	 by definition one has
	\begin{equation} \label{eq.estimhhfirst}
	 h_\varepsilon(t)\leq 1\quad \text{and} \quad h'_\varepsilon(t)\leq 2/\varepsilon,
	 \end{equation}
	using Schwartz's inequality, 
	\eqref{eq.estimhhfirst} and reminding
	that $0\leq\gamma_\ep\leq 1$, we then get
	{\allowdisplaybreaks
	\begin{equation} \label{eq.topasstolimitLebesguelemmaCC}
	\begin{split}
	 & 0 \leq \int_{\mathcal{C}}f(u)h_\varepsilon(|\nabla u|)\gamma^2_\varepsilon\,\d x 
	 \\
	& \leq 2\int_{\mathcal{C}\cap\{\varepsilon <|\nabla u|< 2\varepsilon\}}
	\frac{|\nabla u|}{\varepsilon}\cdot
	\big(p|\nabla u|^{p-2}+qa(x)|\nabla u|^{q-2}\big)\,\|D^2 u\|\,\gamma_\varepsilon^2\,\d x
	\\
	&\qquad\quad 
	+2 \int_{\mathcal{C}}\psi_\ep\cdot
	\big(p|\nabla u|^{p-1} + qa(x)|\nabla u|^{q-1}\big)\,|\nabla \gamma_\varepsilon|\,\d x
	\\
	&  
	\leq 4p\int_{\mathcal{C} \cap\{\varepsilon <|\nabla u| < 2\varepsilon\}}
	|\nabla u|^{p-2} \|D^2 u\| \gamma_\varepsilon^2\,\d x + 
	4q \int_{\mathcal{C} \cap\{\varepsilon <|\nabla u| < 2\varepsilon\}}
	a(x)|\nabla u|^{q-2} \|D^2 u\| \gamma_\varepsilon^2\,\d x \\
	&\qquad\quad +2 \int_{\mathcal{C}}\psi_\ep\cdot
	\big(p|\nabla u|^{p-1} + qa(x)|\nabla u|^{q-1}\big)\,|\nabla \gamma_\varepsilon|\,\d x \\
	& \leq 4p\int_{\mathcal{C}} |\nabla	u|^{p-2} \|D^2 u\| \gamma_\varepsilon^2\cdot
	\mathbf{1}_{\mathcal{D}_\varepsilon}\,\,d x + 
	4q\int_{\mathcal{C}}a(x) |\nabla	u|^{q-2} \|D^2 u\| \gamma_\varepsilon^2 \cdot
	\mathbf{1}_{\mathcal{D}_\varepsilon}\,\d x  \\
	&\qquad\quad + 2p
	\bigg(\int_{\mathcal{C}} |\nabla u|^{p}\,\d x \bigg)^{\frac{p-1}{p}} 
	\bigg(\int_{\mathcal{C}}|\nabla \gamma_\varepsilon|^p\,\d x\bigg)^{\frac{1}{p}} 
	\\
	& \qquad\qquad\quad 
	+ 2q\,\|a\|_{L^\infty(\Omega)}
	\bigg(\int_{\mathcal{C}} |\nabla u|^{q}\,\d x \bigg)^{\frac{q-1}{q}}
	\bigg(\int_{\mathcal{C}}|\nabla \gamma_\varepsilon|^q\,\d x\bigg)^{\frac{1}{q}},
	\end{split}
   \end{equation}
   }
	where $\mathcal{D}_\ep := \mathcal{C}\cap \{\ep<|\nabla u| < 2\ep\}$
	and $\mathbf{1}_{\mathcal{D}_\ep}$ is the indicator function
	of $\mathcal{D}_\ep$.
	\vspace*{0.05cm}
	
	We now aim to apply a dominated-convergence argument to let $\ep\to 0^+$
	in \eqref{eq.topasstolimitLebesguelemmaCC}.
	To this end we first notice that, by definition of $\mathcal{D}_\ep$, we have
	(a.e.\,on $\mathcal{C}$)
	$$\lim_{\ep\to 0^+}\big(|\nabla	u|^{p-2} \|D^2 u\| \gamma_\varepsilon^2\cdot
	\mathbf{1}_{\mathcal{D}_\varepsilon}\big)
	= \lim_{\ep\to 0^+}
	\big(a(\cdot) |\nabla	u|^{q-2} \|D^2 u\| \gamma_\varepsilon^2 \cdot
	\mathbf{1}_{\mathcal{D}_\varepsilon}\big) = 0;$$
	moreover, reminding that $u\in C^1(\overline{\mathcal{C}})$
	and $q > p$, one has
	\begin{equation*}
	\begin{split}
	& \big||\nabla	u|^{p-2} \|D^2 u\| \gamma_\varepsilon^2\cdot
	\mathbf{1}_{\mathcal{D}_\varepsilon}\big|\leq
	|\nabla	u|^{p-2} \|D^2 u\|
	\qquad\text{and} \\[0.15cm]
	&  
	\big|a(x)|\nabla	u|^{q-2} \|D^2 u\| \gamma_\varepsilon^2\cdot
	\mathbf{1}_{\mathcal{D}_\varepsilon}\big|\leq
	C\,|\nabla	u|^{p-2} \|D^2 u\|,
	\end{split}
	\end{equation*}
	where $C := \|a\|_{L^\infty(\Omega)}\cdot
	\|\nabla u\|^{q-p-2}_{L^\infty(\overline{\mathcal{C}})}$.
	Appealing once again to some results by Riey \cite{riey} (see, precisely,
	Corollary 1 with $\beta = \gamma = 0$), we know that
	$$|\nabla	u|^{p-2} \|D^2 u\|\in L^1(\mathcal{C});$$
	as a consequence, by taking into account \eqref{eq.estimhhfirst} 
	and the properties of $\gamma_\ep$, we can pass to the limit
	as $\ep\to 0^+$ in \eqref{eq.topasstolimitLebesguelemmaCC} with the aid
	of Lebesgue's theorem,
	obtaining
	$$\int_{\mathcal{C}}f(u)\,\d x = 0.$$
	This is clearly in contradiction with \eqref{eq.IntegralPositivo}, and the proof is complete.
 \end{proof}
 
\section{Proof of Theorem \ref{thm:symmetry}} \label{sec.mainresult1}

\begin{proof}[Proof of Theorem \ref{thm:symmetry}] 
By assumptions, the singular set $\Gamma$ is contained in the hyperplane $\{x_1 = 0\}$, then the moving plane procedure can be started in the standard way, see e.g \cite{EMS} for the $p$-laplacian case, by using the weak comparison principle in small domains, see \cite[Theorem 4.3]{riey}.
	Indeed, for $\mathbf{a}
	< \lambda < \mathbf{a} + \tau$ with $\tau>0$ small enough, the singularity does not play any role. Therefore, recalling that $w_\lambda$ has a
	singularity at $\Gamma$ and at $R_\lambda (\Gamma)$, we have that
	$w_\lambda \leq 0$ in $\Omega_\lambda$. To proceed
	further we define
	\begin{equation}\nonumber
	\Lambda_0=\{\mathbf{a}<\lambda<0 : u\leq
	u_{t}\,\,\,\text{in}\,\,\,\Omega_t\setminus
	R_t(\Gamma)\,\,\,\text{for all $t\in(\mathbf{a},\lambda]$}\}
	\end{equation}
	and $\lambda_0 = \sup \Lambda_0$, since we proved above that $\Lambda_0$ is not empty. To prove our 
	result we have to
	show that $\lambda_0 = 0$.
	To do this we
	assume that $\lambda_0 < 0$ and we reach a contradiction by proving
	that $u \leq u_{\lambda_0 + \tau}$ in $\Omega_{\lambda_0 + \tau}
	\setminus R_{\lambda_0 + \tau} (\Gamma)$ for any $0 < \tau <
	\bar{\tau}$ for some small $\bar{\tau}>0$. We  remark that
	$|\mathcal{Z}_{\lambda_0}|=0$, see \cite{DS1, riey}.
	Let us take $\mathcal{H}_{\lambda_0}\subset \Omega_{\lambda_0}$ be an open set such that $\mathcal 
	Z_{\lambda_0}\cap\Omega_{\lambda_0}\subset
	\mathcal{H}_{\lambda_0} \subset \subset \Omega$. We note that the existence of such a set is guaranteed 
	by Theorem \ref{thm.Hopftype}. Moreover note  that, since $|\mathcal Z_{\lambda_0}|=0$, we can take $
	\mathcal{H}_{\lambda_0}$ of arbitrarily small measure. By
	continuity we know that $u \leq u_{\lambda_0}$ in $\Omega_{\lambda_0}
	\setminus R_{\lambda_0} (\Gamma)$.
	We can exploit Theorem \ref{thm.SMP}
	to get that, in any connected component of 
	$\Omega_{\lambda_{0}}\setminus \mathcal Z_{\lambda_0}$, we have
	$$
	u<u_{\lambda_0} \qquad\text{or}\qquad u\equiv u_{\lambda_0}.$$
	The case $u\equiv u_{\lambda_0}$ in some  connected component
	$\mathcal{C}_{\lambda_{0}}$ of $\Omega_{\lambda_{0}}\setminus \mathcal Z_{\lambda_0}$ is not
	possible, since by symmetry, it would imply the existence of  a local symmetry phenomenon and 
	consequently that $\Omega \setminus \mathcal  Z_{\lambda_0}$ would be not connected,  in  spite of what 
	we proved in Lemma \ref{conncomp}. Hence we deduce that $u <
	u_{\lambda_0}$ in $\Omega_{\lambda_0} \setminus R_{\lambda_0}
	(\Gamma)$. Therefore, given a compact set $\mathcal{K} \subset
	\Omega_{\lambda_0} \setminus (R_{\lambda_0} (\Gamma)\cup \mathcal{H}_{\lambda_0})$, by
	uniform continuity we can ensure that $u < u_{\lambda_0+\tau}$ in
	$\mathcal{K}$ for any $0 < \tau < \bar{\tau}$ for some small $\bar{\tau}>0$.
	Note that to do this we implicitly assume, with no loss of
	generality, that $R_{\lambda_0} (\Gamma)$ remains bounded away
	from $\mathcal{K}$. 
	
	Arguing in a similar fashion as in Lemma \ref{leaiuto}, we
	consider
	\begin{equation}\label{eq:moduloooooooo}
	\varphi_\varepsilon := w^+_{\lambda_0 + \tau} 
	\psi_\varepsilon^{p+q}\cdot\mathbf{1}_{\Omega_{\lambda_0 + \tau}}
	= \begin{cases}
	w^+_{\lambda_0 + \tau} 
	\psi_\varepsilon^{p+q}, & \text{in $\Omega_{\lambda_0+\tau}$}, \\
	0, & \text{otherwise}.
	\end{cases}
	\end{equation}
	By density arguments as above, we plug $\varphi_\varepsilon$ as test
	function in \eqref{debil1} and \eqref{eq.PDEulambda} so that,
	subtracting, we get
	
	\begin{equation} \label{eq:Step1}
	\begin{split} &
	p\int_{\Omega_{\lambda_0 + \tau} \setminus \mathcal{K}} 
	\langle|\nabla u|^{p-2} \nabla u - |\nabla u_{\lambda_0 + \tau}|^{p-2}\nabla u_{\lambda_0 + \tau},
	\nabla w^+_{\lambda_0 + \tau}\rangle \, \psi_\varepsilon^{p+q} \, \d x\\
	&+q\int_{\Omega_{\lambda_0 + \tau} \setminus \mathcal{K}} a(x)
	\langle|\nabla u|^{q-2} \nabla u
	- |\nabla u_{\lambda_0 + \tau}|^{q-2} \nabla u_{\lambda_0 + \tau},
	\nabla w^+_{\lambda_0 + \tau}\rangle \, \psi_\varepsilon^{p+q} \, \d x\\
	&+ p(p+q) \int_{\Omega_{\lambda_0 + \tau} \setminus \mathcal{K}} \langle|\nabla u|^{p-2}
	\nabla u - |\nabla u_{\lambda_0 + \tau}|^{p-2} \nabla u_{\lambda_0 +
		\tau}, \nabla \psi_\varepsilon\rangle \, 
		\psi_\varepsilon^{p+q-1} w_{\lambda_0 + \tau}^+ \, \d x \\
	&+ q(p+q) \int_{\Omega_{\lambda_0 + \tau} \setminus \mathcal{K}}a(x) \langle|\nabla u|^{q-2}
	\nabla u - |\nabla u_{\lambda_0 + \tau}|^{q-2} \nabla u_{\lambda_0 +
		\tau}, \nabla \psi_\varepsilon\rangle \,	
		\psi_\varepsilon^{p+q-1} w_{\lambda_0 + \tau}^+ \, \d x \\
	&= \int_{\Omega_{\lambda_0 + \tau} \setminus \mathcal{K}} (f(u)-f(u_\lambda))
	w_{\lambda_0 + \tau}^+ \psi_\varepsilon^{p+q} \, \d x.
	\end{split}
	\end{equation}
	Now we split the set $\Omega_{\lambda_0 + \tau}
	\setminus \mathcal{K}$ as the union of two disjoint subsets $\Omega^{(1)}_{\lambda_0 +
	\tau}$ and $\Omega^{(2)}_{\lambda_0 + \tau}$ such that $\Omega_{\lambda_0 +\tau} \setminus \mathcal{K}= \Omega^{(1)}_{\lambda_0 + \tau} \cup \Omega^{(2)}_{\lambda_0 +	\tau}$.
	In particular, we set
	\begin{equation}\nonumber
	\begin{split}
	 \Omega^{(1)}_{\lambda_0 +	\tau} &:= \{ x \in \Omega_{\lambda_0 + \tau} \setminus
	\mathcal{K} \ : \ |
	\nabla u_{\lambda_0 + \tau} (x)| < 2| \nabla u (x)|  \}\quad
	\text{and}\\ \\
	\Omega^{(2)}_{\lambda_0 +	\tau} &:= \{ x \in \Omega_{\lambda_0 + \tau} \setminus
	\mathcal{K} \ : \ | \nabla u_{\lambda_0 + \tau} (x)| \geq  2| \nabla u
	(x)|\}.
	\end{split}
	\end{equation}
	From \eqref{eq:Step1} and using \eqref{eq:inequalities}, repeating verbatim arguments along the proof of Lemma \ref{leaiuto},  we have
	
\begin{equation}\nonumber
\begin{split}
&C_1\int_{\Omega_{\lambda_0+\tau} \setminus \mathcal{K}} \psi_\ep^{p+q}\,\big\{ p(|\nabla u|+|\nabla u_{\lambda_0+\tau}|)^{p-2} + q a(x)\,(|\nabla u|+|\nabla u_{\lambda_0+\tau}|)^{q-2}\big\}|\nabla w_{\lambda_0+\tau}^+|^2 \, \d x  \\[0.1cm]
\leq& C_0\Big(\rho+\kappa\rho^{\frac{p}{p-1}}+\kappa'\rho^{\frac{q}{q-1}}\Big) \times \\
& \quad \times\int_{\Omega_{\lambda_0+\tau} \setminus \mathcal{K}} \psi_\ep^{p+q}\,\big\{ p(|\nabla u|+|\nabla u_{\lambda_0+\tau}|)^{p-2} + q a(x)\,(|\nabla u|+|\nabla u_{\lambda_0+\tau}|)^{q-2}\big\}|\nabla w_{\lambda_0+\tau}^+|^2\,\d x \\[0.1cm]
&+ C_0\,\bigg\{ \frac{\kappa}{\rho} \bigg(\int_{\Omega_{\lambda_0+\tau} \setminus \mathcal{K}} |\nabla u|^p\,\d x\bigg)^{\frac{p-2}{p}}\,\bigg( \int_{\Omega_{\lambda_0+\tau} \setminus \mathcal{K}} |\nabla \psi_\ep|^p\bigg)^{\frac{2}{p}} +\frac{\kappa}{\rho^p} \int_{\Omega_{\lambda_0+\tau} \setminus \mathcal{K}} |\nabla \psi_\ep|^p\,\d x \\[0.1cm]
&\qquad \quad  + \frac{\kappa'}{\rho} \bigg(\int_{\Omega_{\lambda_0+\tau} \setminus \mathcal{K}} |\nabla u|^q\,\d x\bigg)^{\frac{q-2}{q}}\,\bigg( \int_{\Omega_{\lambda_0+\tau} \setminus \mathcal{K}} |\nabla \psi_\ep|^q\bigg)^{\frac{2}{q}} +\frac{\kappa}{\rho^q}\int_{\Omega_{\lambda_0+\tau} \setminus \mathcal{K}} |\nabla \psi_\ep|^q\,\d x \\
&\qquad \quad  + L_f \int_{\Omega_{\lambda_0 + \tau} \setminus \mathcal{K}} (w_{\lambda_0	+ \tau}^+)^2 \psi_\varepsilon^{p+q}\, \d x. \bigg\},
\end{split}
\end{equation}
where $C_0=C_0(p,q,\lambda_0, \tau, \|u\|_{L^\infty(\Omega)})$. Taking $\rho>0$ sufficiently small in such a way that
	$$C_1-C_0\rho - C_0 \kappa \rho^\frac{p}{p-1} - C_0 \kappa' \rho^\frac{q}{q-1} < \frac{1}{2},$$ 
	as we did above passing to the
	limit for $\varepsilon \rightarrow 0$ (thanks to Fatou's lemma) we obtain
	\begin{equation}\label{eq:final}
	\begin{split}
		&\int_{\Omega_{\lambda_0+\tau} \setminus \mathcal{K}} \left[p(|\nabla u| + |\nabla u_{\lambda_0+\tau}|)^{p-2} + q a(x)(|\nabla u| + |\nabla u_{\lambda_0+\tau}|)^{q-2}\right]
		|\nabla w_{\lambda_0+\tau}^+|^2 \psi_\varepsilon^{p+q} \, \d x  \\[0.1cm]
	&\leq 2 C_0L_f \int_{\Omega_{\lambda_0 + \tau} \setminus \mathcal{K}} (w_{\lambda_0
		+ \tau}^+)^2 \, \d x.
	\end{split}
	\end{equation}

	\noindent We now observe that, since $p > 2$, we have
	$$|\nabla u|^{p-2} \leq p(|\nabla u| +
	|\nabla u_{\lambda_0+\tau}|)^{p-2} \leq p(|\nabla u| +
	|\nabla u_{\lambda_0+\tau}|)^{p-2} + q a(x)(|\nabla u| +
	|\nabla u_{\lambda_0+\tau}|)^{q-2}.$$
	Setting $\rho:=|\nabla
	u|^{p-2}$, we see that $\rho$ is bounded in
	$\Omega_{\lambda_0+\tau}$, hence $\rho \in L^1(\Omega_{\lambda_0 +
		\tau})$. By applying the weighted Poincar\'{e} inequality to
	\eqref{eq:final}, see \cite[Theorem 4.2]{riey}, we deduce that
	\begin{equation}\label{eq:final3}
	\begin{split}
	\int_{\Omega_{\lambda_0 + \tau} \setminus \mathcal{K}} & \rho |\nabla
	w_{\lambda_0 + \tau}^+|^2 \, \d x \\
	& \leq \int_{\Omega_{\lambda_0 + \tau} \setminus \mathcal{K}} \left[ p(|\nabla u| + |\nabla u_{\lambda_0 + \tau}|)^{p-2} + q a(x)(|\nabla u| + |\nabla u_{\lambda_0 + \tau}|)^{q-2} \right]  |\nabla w_{\lambda_0 + \tau}^+|^2 \, \d x \\
	& \leq 2C_0 L_f \int_{\Omega_{\lambda_0 + \tau} \setminus \mathcal{K}} (w_{\lambda_0 + \tau}^+)^2 \, \d x \\
	&\leq 2C_0 L_f C_p(|\Omega_{\lambda_0 + \tau} \setminus \mathcal{K}|)
	\int_{\Omega_{\lambda_0 + \tau} \setminus \mathcal{K}} \rho |\nabla
	w_{\lambda_0 + \tau}^+|^2 \, \d x
	\end{split}
	\end{equation}
	where $C_p(\cdot)$ tends to zero if the measure of the domain tends to zero. For $\bar{\tau}$
	small and $\mathcal{K}$ large, we may assume that
	$$2C_0 L_f C_p(|\Omega_{\lambda_0 + \tau} \setminus \mathcal{K}|) < \frac{1}{2};$$
	as a consequence by \eqref{eq:final3} and Lemma
	\ref{leaiuto} we deduce that
	$$\int_{\Omega_{\lambda_0+\tau}} \rho |\nabla
	w_{\lambda_0+\tau}^+|^2 \, \d x = \int_{\Omega_{\lambda_0 + \tau}
		\setminus \mathcal{K}}  \rho |\nabla w_{\lambda_0+\tau}^+|^2 \, \d x  = 0,$$
	and this proves that $u \leq u_{\lambda_0+\tau}$ in $\Omega_{\lambda_0 +
		\tau} \setminus R_{\lambda_0 + \tau} (\Gamma)$ for any $0 < \tau <
	\bar{\tau}$ (provided $\bar{\tau}>0$ is small enough). Such a contradiction
	shows that
	$$ \lambda_0 = 0.$$
	Since the moving plane procedure can be performed in the same way
	but in the opposite direction, then this proves the desired symmetry
	result. The fact that the solution is increasing in the
	$x_1$-direction in $\{x_1 < 0\}$ is implicit in the moving plane
	procedure.	
  \end{proof}

\section{Proof of Theorem \ref{thm:noSingularSym}} \label{sec.mainresult2}
\begin{proof}[Proof of Theorem \ref{thm:noSingularSym}] 
	First of all we observe that, in the case 
	$q>p \geq 2$, Theorem \ref{thm:noSingularSym} immediately
	follows
	from Theorem \ref{thm:symmetry} by choosing
	$\Gamma = \emptyset$. 
	As a consequence, we assume from now on
	that $1<p<2\leq q$ (the case $1<p<q<2$ is very similar).
	\vspace*{0.05cm}
	
	By proceeding exactly as in the proof of Theorem \ref{thm:symmetry}, we easily
	recognize that
	$$\Lambda_0=\{\mathbf{a}<\lambda<0 : u\leq
		u_{t}\,\,\,\text{in}\,\,\,\Omega_t \,\,\,\text{for all $t\in(\mathbf{a},\lambda]$}\}
		\neq \emptyset,$$
	and thus $\lambda_0 := \sup(\Lambda_0)\in (\mathbf{a},0]$.
	Arguing by contradiction, we suppose that 
	$$\lambda_0 < 0$$ 
	and we proceed again
	as in the proof of Theorem \ref{thm:symmetry}: choosing
	an open neighborhood $\mathcal{H}_{\lambda_0}$
	of $\mathcal{Z}_{\lambda_0}$ with arbitrary small Lebesgue measure, 
	for every compact set
	$\mathcal{K} \subseteq
	\Omega_{\lambda_0} \setminus 
	\mathcal{H}_{\lambda_0}$  we are able to
	find a suitable
	$\bar{\tau} > 0$ such that
	\begin{equation} \label{eq.signinKGammavuoto}
	\text{$u < u_{\lambda_0+\tau}$ in
	$\mathcal{K}$ for any $0 < \tau < \bar{\tau}$}.
	\end{equation}
	We now fix $\tau\in (0,\bar{\tau})$ and 
	we consider the function $\varphi:\R^N\to\R$ defined as follows:
	$$\varphi := w^+_{\lambda_0 + \tau}\cdot\mathbf{1}_{\Omega_{\lambda_0 + \tau}}.$$
	Since $u\in C^1(\overline{\Omega})$, we clearly have that
	$\varphi\in \mathrm{Lip}(\R^N)$ and
	$\mathrm{supp}(\varphi)\subseteq\Omega_{\lambda_0+\tau}$.
	By a standard density argument we can use $\varphi$ as a test function
	in \eqref{debil1} and \eqref{eq.PDEulambda}, thus obtaining
	
	\begin{equation} \label{eq:noSingStep1}
		\begin{split} &p\int_{\Omega_{\lambda_0 + \tau} 
		\setminus \mathcal{K}} \langle|\nabla u|^{p-2} \nabla u - 
		|\nabla u_{\lambda_0 + \tau}|^{p-2}\nabla u_{\lambda_0 + \tau},
			\nabla w^+_{\lambda_0 + \tau}\rangle \, \d x\\
			&\qquad
			+q\int_{\Omega_{\lambda_0 + \tau} \setminus \mathcal{K}} 
			a(x)\langle|\nabla u|^{q-2} \nabla u - |\nabla u_{\lambda_0 + \tau}|^{q-2} 
			\nabla u_{\lambda_0 + \tau},
			\nabla w^+_{\lambda_0 + \tau}\rangle \,  \d x\\
			&= \int_{\Omega_{\lambda_0 + \tau} \setminus \mathcal{K}} (f(u)-f(u_\lambda))
			w_{\lambda_0 + \tau}^+ \, \d x.
		\end{split}
	\end{equation}
	Starting from \eqref{eq:noSingStep1}, we closely follow
	the proof of Lemma \ref{leaiuto} up to formula \eqref{eq.tostartfrom}:
	since in our case we formally have $\psi_\ep \equiv 1$ (and thus
	$\nabla \psi_\ep \equiv 0$), we get
	\begin{equation}\label{eq:noSingfinal1}
		\begin{split}
			 & \int_{\Omega_{\lambda_0+\tau} \setminus \mathcal{K}} 
			 \big\{p(|\nabla u| + |\nabla u_{\lambda_0+\tau}|)^{p-2} + 
			 qa(x)(|\nabla u| + |\nabla u_{\lambda_0+\tau}|)^{q-2}\big\}\cdot
			|\nabla w_{\lambda_0+\tau}^+|^2\,\d x  \\
			& \qquad \leq C\,
			\int_{\Omega_{\lambda_0+\tau}\setminus \mathcal{K}} 
			(w_{\lambda_0 + \tau}^+)^2\,\d x,
		\end{split}
	\end{equation}
	where $C = C(p,q,\|u\|_{L^\infty(\Omega)},f) > 0$
	is a suitable constant.
	To proceed further, we set
	$$\varrho :=\left(1+|\nabla u|^2 + |\nabla u_{\lambda_0+\tau}|^2 \right)^{\frac{p-2}{2}}$$
	in order to exploit the weighted Sobolev inequality from 
	\cite{Tru}. We remind that the results of \cite{Tru} do apply if
	$\varrho \in L^1(\Omega_\lambda)$ and if there exists
	some $t > N/2$ such that
	$$1/\varrho\in L^t(\Omega_\lambda).$$
	In particular, if $\mathcal{O}\subseteq\R^N$ is any
	(non-void) open set, the space $H^1_{0,\varrho} (\mathcal{O})$ 
	(see \cite{DS1,Tru}) coincides with the closure of $C^\infty_c(\mathcal{O})$ 
	with respect to the norm
	$$
	\|w\|_\varrho\,:=\,\||\nabla w|\|_{L^2(\mathcal{O},\varrho)}:=
	\bigg(\int_{\mathcal{O}}\rho\,|\nabla w|^2\,\d x \bigg)^{\frac 12},
	$$
	and there exists a suitable constant $C_S > 0$ such that
	\begin{equation} \label{eq.SobolevTrud}
	\|w\|_{L^{2^*_\varrho}(\mathcal{O})}
	\,\leq C_S \,\||\nabla w|\|_{L^2(\mathcal{O},\varrho)}
	\qquad\text{for any $w\in H^1_{0,\varrho}(\mathcal{O})$},
	\end{equation}
	where the exponent $2^{*_\varrho}$ is defined via the relation
	$$\frac{1}{2^*_\varrho}:=\frac{1}{2}\left(1+\frac{1}{t}\right)-\frac{1}{N}.$$
	We now observe that, since $u\in C^1(\overline{\Omega})$ (and $1<p < 2$), it is possible to find
	two constants $K_1,K_2 > 0$, only
	depending  on $p$ and on $\|u\|_{C^1(\overline{\Omega})}$, such that
	\begin{equation}\label{eq:weight4}
		\left( 1+|\nabla u|^2+|\nabla u_{\lambda_0+\tau}|^2
		\right)^{\frac{2-p}{2}} \leq K_1 + K_2 |\nabla
		u_{\lambda_0+\tau}|^{2-p} \qquad\text{in $\Omega_{\lambda_0+\tau}$}.
	\end{equation}
	Using \eqref{eq:weight4} and the fact both
	$u$ and 
	$u_{\lambda_0+\tau}$ are of class $C^1$
	on $\overline{\Omega}_{\lambda_0+\tau}$
	(see \eqref{eq.defOmegalambda}
	and remind that, since $\Gamma = \emptyset$, one actually has $u\in C^1(\overline{\Omega})$),
	we deduce that
	$$
	{1}/{\varrho}:=\big( 1+|\nabla u|^2+|\nabla
	u_{\lambda_0+\tau}|^2 \big)^{\frac{2-p}{2}} \in
	L^\infty(\Omega_{\lambda_0+\tau}).
	$$
	We are then entitled to use the 
	Sobolev inequality  \eqref{eq.SobolevTrud}
	in \eqref{eq:noSingfinal1}: observing that
	\begin{equation}\label{eq:weight3}
	  \begin{split}
		\big(|\nabla u|+|\nabla u_{\lambda_0+\tau}|\big)^{2-p} & \leq
		2^{\frac{2-p}{2}} \big( |\nabla u|^2+|\nabla u_{\lambda_0+\tau}|^2
		\big)^{\frac{2-p}{2}} \\
		& \leq 2^{\frac{2-p}{2}} \big( 1+|\nabla
		u|^2+|\nabla u_{\lambda_0+\tau}|^2 \big)^{\frac{2-p}{2}},
		\end{split}
	\end{equation}
	by exploiting
	\eqref{eq:weight3} and H\"older's inequality we obtain
	\begin{equation}\label{eq:Poincare}
		\begin{split}
			& \int_{\Omega_{\lambda_0 + \tau} \setminus \mathcal{K}} \varrho |\nabla
			w_{\lambda_0+\tau}^+|^2 \,\d x\\
			& \quad
			\leq 2^{\frac{2-p}{2}}
			\int_{\Omega_{\lambda_0+\tau} \setminus \mathcal{K}} 
			 \big\{p(|\nabla u| + |\nabla u_{\lambda_0+\tau}|)^{p-2} + 
			 qa(x)(|\nabla u| + |\nabla u_{\lambda_0+\tau}|)^{q-2}\big\}\cdot
			|\nabla w_{\lambda_0+\tau}^+|^2\,\d x \\
			& \quad \leq C_p\int_{\Omega_{\lambda_0 + \tau}
				\setminus \mathcal{K}}	(w_{\lambda_0+\tau}^+)^2 \,\d x
			\\
			& \quad \leq C_p\cdot 
				\mathcal{H}^N(\Omega_{\lambda_0+\tau} \setminus
			\mathcal{K})^{\frac{1}{(\frac{2}{2^*_\varrho})'}}
			\left(\int_{\Omega_{\lambda_0 + \tau} \setminus \mathcal{K}}
			(w_{\lambda_0+\tau}^+)^{2^*_\varrho} \,\d x\right)^{\frac{2}{2^*_\varrho}}
			\\
			& \quad \leq \Theta_p(|\Omega_{\lambda_0 + \tau}
			\setminus \mathcal{K}|)\int_{\Omega_{\lambda_0 + \tau} \setminus \mathcal{K}} \varrho
			|\nabla w_{\lambda_0+\tau}^+|^2\,\d x,
		\end{split}
	\end{equation}	
	where $\Theta_p(\cdot)$ tends to zero if the measure of the domain tends to zero. For 
	$\bar{\tau}$ sufficiently
	small and $\mathcal{K}$ sufficiently large, we may assume that
	$$\Theta_p(|\Omega_{\lambda_0 + \tau} \setminus \mathcal{K}|) < \frac{1}{2};$$
	as a consequence, from \eqref{eq:Poincare} and \eqref{eq.signinKGammavuoto} we deduce that
	$$\int_{\Omega_{\lambda_0+\tau}} \rho |\nabla
	w_{\lambda_0+\tau}^+|^2 \, \d x = \int_{\Omega_{\lambda_0 + \tau}
		\setminus \mathcal{K}}  \rho |\nabla w_{\lambda_0+\tau}^+|^2 \, \d x  = 0,$$
	and this proves that $u \leq u_{\lambda_0+\tau}$ in $\Omega_{\lambda_0 +
		\tau}$ for any $0 < \tau <
	\bar{\tau}$ (provided $\bar{\tau}>0$ is sufficiently small). 
	Such a contradiction
	shows that
	$$ \lambda_0 = 0.$$
	Since the moving plane procedure can be performed in the same way
	but in the opposite direction, then this proves the desired symmetry
	result. The fact that the solution is in\-cre\-asing in the
	$x_1$-direction in $\{x_1 < 0\}$ is implicit in the moving plane
	procedure.
\end{proof}
 \begin{rem} \label{rem.Gammanonempty}
 As already mentioned in the Introduction, the approach
 adopted in the proof of Theorem \ref{thm:noSingularSym}
 \emph{cannot be reproduced} if $\Gamma\neq \emptyset$ and
 $p\in (1,2)$. 
 \vspace*{0.05cm}
 
 In fact, when $\Gamma\neq\emptyset$ the reflected function
 $u_{\lambda_0+\tau}$ \emph{is not} of class $C^1$ on $\overline{\Omega}_{\lambda_0+\tau}$;
 as a consequence, even if \eqref{eq:weight4} remains valid
 \emph{(}at least out of $R_{\lambda_0+\tau}(\Gamma)$, which has
 zero Lebesgue measure\emph{)}, \emph{we cannot deduce}
 from this estimate that
 $$1/\varrho = \left(1+|\nabla u|^2 + |\nabla u_{\lambda_0+\tau}|^2 \right)^{\frac{2-p}{2}}
 \in L^\infty(\Omega_{\lambda_0+\tau}),$$
 nor that $1/\varrho \in L^t(\Omega_{\lambda_0+\tau})$ for a sufficiently large $t$.
 This lack of information on the sum\-ma\-bi\-lity of $1/\varrho$ prevents us
 to apply the weighted Sobolev inequality \eqref{eq.SobolevTrud} in
 \eqref{eq:noSingfinal1}.
 
 On the other hand, when $p\in (1,2)$ we cannot use
 $\rho = |\nabla u|^{p-2}$ as a weight for the Sobolev
 inequality: this is due to the fact that, in general,
 we {cannot expect} that
 $$\rho \leq p(|\nabla u|+|\nabla u_{\lambda_0+\tau})^{p-2}
 +qa(x)(|\nabla u|+|\nabla u_{\lambda_0+\tau})^{q-2}.$$
 \end{rem}
\appendix
 \section{The strong comparison principle and the Hopf lemma.}
 We provide here a strong comparison principle
 and a Hopf-type lemma which apply to our context.
 Though these re\-sults seems to be very well-known as 
 consequences of rather general results by Serrin \cite{Serrin70}, 
 we explicitly write them here
 for future reference.
 \medskip
 
 In what follows, if $\mathcal{V}\subseteq\R^N$ is open
 and $v\in C^1(\mathcal{V})$, we say that
 $v$ satisfies
 $$-\mathrm{div} \left(p|\nabla v|^{p-2}\nabla v
  + q a(x)|\nabla v|^{q-2}\nabla v \right)\geq\,[\leq]\,\,0\quad
  \text{in $\mathcal{V}$}$$
 if, for every \emph{non-negative}
 test function $\varphi\in C^1_c(\mathcal{V})$, one has
 $$\int_{\mathcal{V}} \big(p|\nabla v|^{p-2}+qa(x)|\nabla v|^{q-2}\big)
	\langle \nabla v , \nabla
	\varphi\rangle\,\d x \geq\,[\leq]\,\,0.$$
 The function $a$ is assumed to be non-negative, bounded and of class $C^1$
 on $\mathcal{V}$.
 \begin{thm} \label{thm.SMP}
  Let $\mathcal{V}\subseteq\R^N$ be a connected
  open set, and let $v_1,v_2\in C^1(\mathcal{V})$ satisfy
  \begin{align*}
   & -\mathrm{div} \left(p|\nabla v_1|^{p-2}\nabla v_1
  + q a(x)|\nabla v_1|^{q-2}\nabla v_1 \right)\leq 0\quad \text{in $\mathcal{V}$},
  \\
  & -\mathrm{div} \left(p|\nabla v_2|^{p-2}\nabla v_2
  + q a(x)|\nabla v_2|^{q-2}\nabla v_2 \right)\geq 0\quad \text{in $\mathcal{V}$},
  \end{align*}
  respectively. We assume that
  \begin{equation} \label{eq.assnablav1neq}
   \text{$|\nabla v_1|\neq 0$ or $|\nabla v_2|\neq 0$ on the whole of $\mathcal{V}$.} 
  \end{equation}
  Then, either $v_1\equiv v_2$ or $v_1 < v_2$ throughout $\mathcal{V}$.
 \end{thm}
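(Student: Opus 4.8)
The plan is to run the classical argument: near a point where $v_1$ and $v_2$ coincide, the double–phase operator linearises to a \emph{linear uniformly elliptic} operator in divergence form, and the conclusion then follows from the classical strong maximum principle together with a connectedness argument. I argue under the (implicit) assumption $v_1\le v_2$ in $\mathcal V$ — the setting in which the statement is used in the moving–plane scheme — set $w:=v_2-v_1\in C^1(\mathcal V)$, so that $w\ge 0$ on $\mathcal V$, and consider the coincidence set
$$\mathcal Z:=\{x\in\mathcal V:\ w(x)=0\}.$$
Since $w$ is continuous, $\mathcal Z$ is relatively closed in $\mathcal V$; the core of the proof is to show that $\mathcal Z$ is also \emph{open}. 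Granting this, connectedness of $\mathcal V$ forces $\mathcal Z=\mathcal V$ (whence $v_1\equiv v_2$) or $\mathcal Z=\emptyset$ (whence $w>0$, i.e. $v_1<v_2$, throughout $\mathcal V$), which is the assertion.

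So fix $x_0\in\mathcal Z$. Since $w\ge 0$ on $\mathcal V$ and $w(x_0)=0=\min_{\mathcal V}w$, the $C^1$ function $w$ has $\nabla w(x_0)=0$, i.e. $\nabla v_1(x_0)=\nabla v_2(x_0)$; combining this with hypothesis \eqref{eq.assnablav1neq} we get $\nabla v_1(x_0)=\nabla v_2(x_0)\neq 0$. By continuity of $\nabla v_1,\nabla v_2$ we may therefore fix $r>0$ with $\overline{B_r(x_0)}\subset\mathcal V$ so small that the whole segment
$$\xi_t(x):=t\,\nabla v_2(x)+(1-t)\,\nabla v_1(x),\qquad t\in[0,1],\ x\in B_r(x_0),$$
takes values in a fixed compact subset $\mathcal K_0$ of $\R^N\setminus\{0\}$: this is possible precisely because at $x_0$ the segment collapses to the single nonzero vector $\nabla v_1(x_0)$.

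Next I would linearise. Subtracting the weak differential inequality for $v_1$ from that for $v_2$ and writing, for $s\in\{p,q\}$,
$$|\nabla v_2|^{s-2}\nabla v_2-|\nabla v_1|^{s-2}\nabla v_1=M_s(x)\,\nabla w,\qquad M_s(x):=\int_0^1 |\xi_t(x)|^{s-2}\Big(\mathrm{Id}+(s-2)\,\frac{\xi_t(x)\otimes\xi_t(x)}{|\xi_t(x)|^2}\Big)\,\d t$$
(a legitimate identity on $B_r(x_0)$, since $\xi\mapsto|\xi|^{s-2}\xi$ is $C^1$ on the compact set $\mathcal K_0\subset\R^N\setminus\{0\}$), one obtains that $w$ is a non-negative weak supersolution on $B_r(x_0)$:
$$\int_{B_r(x_0)}\langle A(x)\,\nabla w,\nabla\varphi\rangle\,\d x\ \ge\ 0\quad\text{for all }0\le\varphi\in C^1_c(B_r(x_0)),\qquad A(x):=p\,M_p(x)+q\,a(x)\,M_q(x).$$
On $B_r(x_0)$ each $M_s$ is continuous and bounded and — having eigenvalues $|\xi_t|^{s-2}$ and $(s-1)|\xi_t|^{s-2}$ with $s>1$ and $\xi_t\in\mathcal K_0$ — uniformly positive definite; since $a\ge 0$ and $M_q\ge 0$, the matrix $A$ is bounded, continuous and uniformly elliptic on $B_r(x_0)$. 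Thus $w\in C^1(B_r(x_0))$ is a non-negative supersolution of a linear uniformly elliptic operator in divergence form on $B_r(x_0)$ which vanishes at the interior point $x_0$; the classical strong maximum principle (a consequence of the De Giorgi--Nash--Moser theory; see also the general results of Serrin \cite{Serrin70} recalled above) yields $w\equiv 0$ on $B_r(x_0)$, i.e. $B_r(x_0)\subseteq\mathcal Z$. Hence $\mathcal Z$ is open, which completes the argument.

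The only genuinely delicate point is the one isolated in the second paragraph: guaranteeing that on a small ball around a coincidence point the segment joining the two gradients stays \emph{uniformly away from the origin}, so that the problem really linearises to an equation with \emph{bounded}, uniformly elliptic coefficients. This crucially uses that $\nabla w(x_0)=0$ forces $\nabla v_1(x_0)=\nabla v_2(x_0)$, together with the non-degeneracy hypothesis \eqref{eq.assnablav1neq}; without it one would only get a degenerate operator near $x_0$ and the linear strong maximum principle would not be available. Everything else is routine, and one could alternatively invoke directly the general comparison results of Serrin \cite{Serrin70}.
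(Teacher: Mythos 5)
Your proof is correct, but it takes a visibly different route from the paper's. The paper treats Theorem~\ref{thm.SMP} as a corollary of Serrin's strong comparison principle \cite[Theorem~1]{Serrin70}: for $p\geq 2$ the operator $A(x,z,\xi)=p|\xi|^{p-2}\xi+qa(x)|\xi|^{q-2}\xi$ is $C^1$ and Serrin's theorem applies verbatim, whereas for $1<p<2$ the authors verify by hand (using the elementary vector inequalities \eqref{eq:inequalities} together with the non-degeneracy hypothesis \eqref{eq.assnablav1neq}) that the local Lipschitz and monotonicity estimates~(8) and~(10) of \cite{Serrin70} hold on compacta, and then finish by Trudinger's Harnack inequality as in Serrin's proof. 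You instead give a self-contained linearization: you write $A_s(\nabla v_2)-A_s(\nabla v_1)=M_s(x)\nabla w$ by the fundamental theorem of calculus along the gradient segment, observe that near a touching point the segment stays in a compact subset of $\R^N\setminus\{0\}$ (this is the crucial use of $\nabla w(x_0)=0$ together with \eqref{eq.assnablav1neq}), so that $A(x)=pM_p+qaM_q$ is bounded and uniformly elliptic there, and then invoke the linear strong maximum principle (De~Giorgi--Nash--Moser/Harnack) plus a standard open-and-closed argument on the coincidence set. The ideas are the same underneath — both proofs ultimately reduce to Harnack for a uniformly elliptic linearization whose non-degeneracy comes from \eqref{eq.assnablav1neq} — but your presentation avoids the paper's case-split between $p\geq 2$ and $p<2$, because the integral linearization only needs differentiability of $\xi\mapsto|\xi|^{s-2}\xi$ away from the origin, which you guarantee uniformly on the small ball. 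One small but worthwhile remark: the theorem as stated in the paper omits the standing hypothesis $v_1\leq v_2$ in $\mathcal V$ (which is present in Serrin's Theorem~1 and is clearly intended here, since otherwise the dichotomy in the conclusion cannot hold — e.g.\ take $v_1=v_2+c$ with $c>0$); you supplied this assumption explicitly, which is the right thing to do.
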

 \begin{proof}
  If $p\geq 2$, this result is a particular case of \cite[Theorem 1]{Serrin70}: in fact,
  following the notation of this cited theorem,
  our setting corresponds to the choices
  \begin{enumerate}
   \item ${A}(x,z,\xi)
   = p|{\xi}|^{p-2}{\xi}
   + q a(x)|{\xi}|^{q-2}{\xi}$ (for $x\in\mathcal{V},z\in\R$ and
   ${\xi}\in\R^N$); 
   \item $B(x,z,{\xi}) \equiv 0$.
  \end{enumerate}
  We explicitly notice that, since $q > p\geq 2$  and $a\in C^1(\mathcal{V})$,
  the function ${A}(x,z,{\xi})$ is of class $C^1$ on $\mathcal{V}\times\R\times\R^N$;
  moreover, since for every $(x,z,{\xi})\in\mathcal{V}\times\R\times\R^N$ we have
  \begin{align*}
   \partial_{{\xi}}{A}(x,z,{\xi})
   & = p|{\xi}|^{p-2}\Big[\mathrm{I}_N+\frac{p-2}{|{\xi}|^2}
   (\xi_i\cdot\xi_j)_{i,j = 1}^N\Big]
   +qa(x)|{\xi}|^{q-2}\Big[\mathrm{I}_N+\frac{q-2}{|{\xi}|^2}
   (\xi_i\cdot\xi_j)_{i,j = 1}^N\Big],
  \end{align*}
  it follows from assumption \eqref{eq.assnablav1neq}
  that \emph{at least one of the two matrices}
  $\partial_{{\xi}}{A}(x,v_1,\nabla v_1)$
  and $\partial_{{\xi}}{A}(x,v_2,\nabla v_2)$
  is positive definite for every $x\in \mathcal{V}$.
  \medskip
  
  If, instead, $p < 2$, the function $A$ is \emph{no longer}
  differentiable at $\xi = 0$; how\-ever, we claim that estimates
  (8) and (10) in \cite{Serrin70} are still satisfied in our case: more precisely,
  if $K\subseteq\mathcal{V}$ is compact, there exist
  constants $\mathbf{c}_1,\mathbf{c}_2 > 0$ such that, for any $x\in K$, one has
  \begin{align}
	& \big|A(x,v_2,\nabla v_2)-A(x,v_1,\nabla v_1)\big|
	\leq \mathbf{c}_1|\nabla v_2-\nabla v_1|; \label{eq.estimLowerSerrin} \\
	& \langle A(x,v_2,\nabla v_2)-A(x,v_1,\nabla v_1),\nabla v_2-\nabla v_1\rangle
	\geq \mathbf{c}_2|\nabla v_2-\nabla v_1|^2. \label{eq.estimUpperSerrin}
  \end{align}
  In fact, let us assume (to fix ideas) that $q\geq 2$; the case $1<q<2$
  can be faced analogously. Using the explicit expression
  of $A$ and the second estimate in \eqref{eq:inequalities}, we have
  \begin{equation} \label{eq.estimOrderfirst}
   \begin{split}
    & \big|A(x,v_2,\nabla v_2)-A(x,v_1,\nabla v_1)\big| \\
    & \qquad (\text{since $a$ is bounded on $\mathcal{V}$}) \\
    & \qquad
    \leq C_2\Big(p(|\nabla v_1|+|\nabla v_2|)^{p-2}+q\|a\|_{L^\infty(\mathcal{V})}
    (|\nabla v_1|+|\nabla v_2|)^{q-2}\Big)|\nabla v_2-\nabla v_1|.
   \end{split}
  \end{equation}
  Moreover, since $v_1,v_2\in C^1(\mathcal{V})$, $K\subseteq\mathcal{V}$
  is compact and $q\geq 2$, one has
  \begin{equation} \label{eq.estimsumq}
   (|\nabla v_1|+|\nabla v_2|)^{q-2} \leq \big(\max_{K}|\nabla v_1|+\max_K|\nabla v_2|\big)^{q-2}
  =: \kappa^{(1)}_{v_1,v_2}.
  \end{equation}
  Finally, since $p\in(1,2)$, 
  by crucially exploiting assumption \eqref{eq.assnablav1neq} we get
  \begin{equation} \label{eq.estimsump}
   (|\nabla v_1|+|\nabla v_2|)^{p-2}
  \leq \big(\inf_K|\nabla v_1|+\inf_K|\nabla v_2|)^{p-2} =: \kappa^{(2)}_{v_1,v_2} < \infty.
  \end{equation}
  Gathering together 
  \eqref{eq.estimOrderfirst}, \eqref{eq.estimsumq} and 
  \eqref{eq.estimsump}, we then obtain \eqref{eq.estimLowerSerrin}.
  
  As regards \eqref{eq.estimUpperSerrin}, we proceed essentially in the same way:
  using the explicit expression of the function
  $A$ and the first estimate in \eqref{eq:inequalities}, we have 
  \begin{equation} \label{eq.estimOrdersecond}
   \begin{split}
    & \langle A(x,v_2,\nabla v_2)-A(x,v_1,\nabla v_1),\nabla v_2-\nabla v_1\rangle\\
    & \qquad
    \geq C_1\Big(p(|\nabla v_1|+|\nabla v_2|)^{p-2}+q a(x)
    (|\nabla v_1|+|\nabla v_2|)^{q-2}|\Big)|\nabla v_2-\nabla v_1|^2\\
    & \qquad (\text{reminding that $a\geq 0$ on $\mathcal{V}$}) \\
    & \qquad \geq p\,C_1(|\nabla v_1|+|\nabla v_2|)^{p-2}|\nabla v_2-\nabla v_1|^2.
   \end{split}
  \end{equation}
  On the other hand, since $p\in(1,2)$, 
  again by exploiting assumption \eqref{eq.assnablav1neq} we get
  \begin{equation} \label{eq.estimsumpSecond}
   (|\nabla v_1|+|\nabla v_2|)^{p-2}
  \geq \big(\max_K|\nabla v_1|+\max_K|\nabla v_2|)^{p-2} = \kappa^{(1)}_{v_1,v_2} 
  \in (0,\infty).
  \end{equation}
  Gathering together \eqref{eq.estimOrdersecond} and 
  \eqref{eq.estimsumpSecond}, we then obtain \eqref{eq.estimUpperSerrin}.
  \medskip
  
  With \eqref{eq.estimLowerSerrin}-\eqref{eq.estimUpperSerrin} at hand, we
  can exploit the celebrated Harnack inequality established by Trudinger
  \cite{TruHarnack} and conclude exactly as in the proof of 
  \cite[Theorem 1]{Serrin70}.
 \end{proof}
 \begin{thm} \label{thm.Hopftype}
  Let $\mathcal{V}\subseteq\R^N$ be a fixed open set, and let $v\in C^1(\overline{\mathcal{V}})$
  satisfy
  $$-\mathrm{div} \left(p|\nabla v|^{p-2}\nabla v
  + q a(x)|\nabla u|^{q-2}\nabla v \right)\geq 0\quad
  \text{in $\mathcal{V}$}.$$
  We assume that $v < 0$ on $\mathcal{V}$ and that there exists a
  point $y\in\partial\mathcal{V}$
  such that $v(y) = 0$. Then, if $\mathcal{V}$ satisfies the \emph{interior cone
  condition} at $y$ and $|\nabla v|\neq 0$ on $\mathcal{V}$, one has
  $$\nabla v(y) \neq 0.$$
 \end{thm}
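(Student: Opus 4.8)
The plan is to reduce the statement to the classical Hopf lemma for uniformly elliptic quasilinear operators of the form treated by Serrin in \cite{Serrin70}. The key observation is that, away from the zero set of $\nabla v$, the operator
$$Lv := -\mathrm{div}\big(p|\nabla v|^{p-2}\nabla v + q a(x)|\nabla v|^{q-2}\nabla v\big)$$
is a quasilinear operator whose principal part is uniformly elliptic \emph{on the solution $v$}, exactly because the coefficient $(|\nabla v|+|\nabla v'|)^{p-2}$ (and its $q$-analogue) stays bounded above and below on compact sets once we know $|\nabla v|\neq 0$.

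First I would fix the point $y\in\partial\mathcal{V}$ with $v(y)=0$ and, using the interior cone condition, pick a small ball $B\subseteq\mathcal{V}$ tangent to $\partial\mathcal{V}$ at $y$, i.e.\ with $\overline{B}\cap\partial\mathcal{V}=\{y\}$. On $\overline{B}$ the function $v\in C^1(\overline{B})$ satisfies $|\nabla v|\neq 0$, so there exist $0<m\leq M<\infty$ with $m\leq|\nabla v|\leq M$ on $\overline{B}$. Next I would verify that, on $\overline{B}$, the structural conditions of \cite[Theorem 5]{Serrin70} are met: writing $A(x,z,\xi)=p|\xi|^{p-2}\xi+qa(x)|\xi|^{q-2}\xi$ and $B\equiv 0$, one has, for $\xi$ ranging in the compact annulus $\{m/2\leq|\xi|\leq 2M\}$ (which contains $\nabla v(x)$ for all $x\in\overline{B}$), the ellipticity and growth bounds
$$c_1|\zeta|^2\leq\langle\partial_\xi A(x,z,\xi)\zeta,\zeta\rangle,\qquad |\partial_\xi A(x,z,\xi)|+|\partial_x A(x,z,\xi)|\leq c_2,$$
with $c_1,c_2>0$ depending only on $p,q,m,M,\|a\|_{C^1(\overline{B})}$; when $p\geq 2$ this is immediate from the explicit Hessian computation already displayed in the proof of Theorem \ref{thm.SMP}, and when $1<p<2$ it follows in the same way because $|\xi|^{p-2}$ is bounded above and below on the annulus. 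Since $-\mathrm{div}\,A(x,v,\nabla v)\geq 0$ in $\mathcal{V}$, the function $v$ is a supersolution of a uniformly elliptic quasilinear equation in divergence form on $B$.

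With these bounds in hand, the classical boundary-point lemma for such operators (Serrin's Hopf lemma, \cite[Theorem 5]{Serrin70}, or equivalently the version in \cite{Da2, PucciSerrin}) applies directly: since $v<0=v(y)$ in $B$ and $v$ attains its maximum over $\overline{B}$ at the boundary point $y$ where $\partial\mathcal{V}$ is $C^1$ (being tangent to a ball), we conclude $\partial v/\partial\nu(y)>0$ for the outer normal $\nu$, hence $\nabla v(y)\neq 0$. The only mildly delicate point — and the one I would spend the most care on — is the case $1<p<2$: there the operator is not differentiable at $\xi=0$, so one must be slightly careful that the whole construction is confined to the region where $|\nabla v|$ is bounded below, which is guaranteed precisely by the hypothesis $|\nabla v|\neq 0$ on $\mathcal{V}$ together with the compactness of $\overline{B}$. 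This is entirely analogous to the treatment of \eqref{eq.estimLowerSerrin}--\eqref{eq.estimUpperSerrin} in the proof of Theorem \ref{thm.SMP}, and I would simply refer to that argument.
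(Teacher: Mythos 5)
Your opening step contains a circular claim. You assert that, since $|\nabla v|\neq 0$ on $\mathcal{V}$, one may find $0<m\leq M<\infty$ with $m\leq|\nabla v|\leq M$ on the \emph{closed} ball $\overline{B}$. But $\overline{B}$ touches $\partial\mathcal{V}$ exactly at $y$, and the hypothesis only gives $|\nabla v|\neq 0$ on the \emph{open} set $\mathcal{V}$: nothing is assumed about $|\nabla v(y)|$, which is precisely the quantity the statement wants to show is nonzero. Had the uniform lower bound $|\nabla v|\geq m>0$ held on $\overline{B}$, evaluating it at $y$ would already give the thesis and no boundary-point lemma would be needed; and without it the operator may well degenerate as $x\to y$, so the uniform ellipticity on $\overline{B}$ that your appeal to Serrin's boundary-point lemma crucially requires is simply not available. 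The delicate case is $p\geq 2$, where $|\nabla v|^{p-2}\to 0$ as $|\nabla v|\to 0$; this is exactly the scenario a Hopf lemma for degenerate quasilinear operators must be designed to handle, and it cannot be dismissed by postulating $|\nabla v|$ bounded away from zero up to $y$.

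The paper avoids this pitfall entirely: it invokes \cite[Theorem 3]{Serrin70} (applied with the comparison function $u\equiv 0$) to show that the zero of $v$ at $y$ has finite order, and then, because the zero-order term $B(x,z,\xi)\equiv 0$, concludes that the order is zero, whence $\nabla v(y)\neq 0$. Serrin's Theorem~3 requires the structure conditions only on compact subsets of $\mathcal{V}$ --- which is exactly what $|\nabla v|\neq 0$ on $\mathcal{V}$ delivers (cf.\ the estimates \eqref{eq.estimLowerSerrin}--\eqref{eq.estimUpperSerrin} used in the proof of Theorem \ref{thm.SMP}) --- without ever needing a uniform lower bound on $|\nabla v|$ near $y$. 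If you wish to keep a barrier-type argument instead, you would need a version of the boundary point lemma that tolerates the ellipticity degenerating as the gradient vanishes (in the spirit of the quasilinear results in \cite{Da2}); the point that cannot be bypassed is controlling $|\nabla v|$ as $x\to y$ without assuming it a priori.
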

 \begin{proof}
  First of all, by \cite[Theorem 3]{Serrin70} (applied here with $u\equiv 0$) we know that
  the zero of $v$ ad $y$ is of finite order, say $m\in\mathbb{N}\cup\{0\}$.
  In fact, following the notations
  in this cited theorem, our context corresponds to the choices
  \begin{enumerate}
   \item ${A}(x,z,{\xi})
   = p|{\xi}|^{p-2}{\xi}
   + q a(x)|{\xi}|^{q-2}{\xi}$ (for $x\in\mathcal{V},z\in\R$ and
   ${\xi}\in\R^N$); 
   \item $B(x,z,{\xi}) \equiv 0$.
  \end{enumerate}
  We explicitly notice that, when $p\geq 2$, the function $A$ is continuously
  differentiable on the whole of $\mathcal{V}\times\R\times\R^N$; when $p\in (1,2)$, instead,
  the function $A$ is no longer differentiable ad $\xi = 0$ but we have at our disposal
  the estimates \eqref{eq.estimLowerSerrin}-\eqref{eq.estimUpperSerrin}.
  
  To conclude the demonstration
  it suffices to observe that, 
  since in our case we have $B(x,z,{\xi}) \equiv 0$,
 a closer inspection to the proof of \cite[Theorem 3]{Serrin70} shows that
 $$m = 0;$$
 from this, we immediately deduce that
 $\nabla v(y) \neq 0$, as desired.
 \end{proof}

\end{document}